\documentclass[12pt]{article}

\usepackage{amssymb}
\usepackage{amsfonts}
\usepackage{amsmath}
\usepackage{multirow} 
\usepackage{titlesec}
\titleformat*{\section}{\large\bfseries}
\titleformat*{\subsection}{\normalsize\bfseries}
\titleformat*{\subsubsection}{\normalsize\bfseries}
\titleformat*{\paragraph}{\normalsize\bfseries}
\titleformat*{\subparagraph}{\normalsize\bfseries}
\setcounter{MaxMatrixCols}{10}

\DeclareMathOperator{\id}{id}
\newtheorem{theorem}{Theorem}[section]
\newtheorem{proposition}[theorem]{Proposition}

\newtheorem{remark}[theorem]{Remark}

\newtheorem{definition}[theorem]{Definition}
\newtheorem{example}[theorem]{Example}

\newenvironment{proof}[1][Proof]{\noindent\textbf{#1} }{\ \rule{0.5em}{0.5em}}
\frenchspacing \textwidth 15truecm
\topmargin -2truecm
\textheight 25truecm
\oddsidemargin .7truecm

\begin{document}

\date{ }
\title{Interpolation of geometric structures compatible with a pseudo
	Riemannian metric}
\author{Edison Alberto Fern\'andez-Culma, Yamile Godoy and Marcos Salvai~\thanks{%
		Partially supported by \textsc{CONICET, FONCyT, SECyT~(UNC)}.} \\
	{\small {Universidad Nacional de C\'ordoba \,-\, Conicet}}\\
		{\small  {FaMAF\,-\,CIEM, Ciudad Universitaria, 5000 C%
			\'{o}rdoba, Argentina}}\\
	\small {\{efernandez, ygodoy, salvai\}@famaf.unc.edu.ar}}
\maketitle

\begin{abstract}
Let $(M,g)$ be a pseudo Riemannian manifold. We consider four geometric structures on $M$ compatible with $g$: two almost complex and two almost product structures satisfying additionally certain integrability conditions.
For instance, if $r$ is a product structure and symmetric with respect to $g$, then $r$ induces a pseudo Riemannian product structure on $M$. Sometimes the integrability condition is expressed by the closedness of an associated two-form: if $j$ is almost complex on $M$ and $\omega(x,y)=g(jx,y)$ is symplectic, then $M$ is almost pseudo K\"ahler.
Now, product, complex and symplectic structures on $M$ are trivial examples of generalized (para)complex structures in the sense of Hitchin. We use the latter in order to define the notion of interpolation of geometric structures compatible with $g$. We also compute the typical fibers of the twistor bundles of the new structures and give examples for $M$ a Lie group with a left invariant metric. 
\end{abstract}

\noindent 2000 MSC:\ 22F30, 22F50, 53B30, 53B35, 53C15, 53C56, 53D05

\smallskip

\noindent Keywords: generalized complex structure; interpolation; K\"{a}hler; symplectic; signature; twistor bundle

\section{Introduction}

Generalized complex geometry includes as special cases complex and
symplectic geometries. It was introduced by Nigel Hitchin in \cite{H} in
2003 and further developed by Marco Gualtieri \cite{gcg} and Gil Cavalcanti.
Since then it has greatly expanded, with a significant impact in
Mathematical Physics. The paracomplex analogue was studied by A\"{i}ssa Wade 
\cite{Wade}.

The article \cite{Salvai} deals with similar ideas, now starting not just from a
smooth manifold, but from a manifold which is already endowed with a
structure, and working out a notion of interpolation of supplementary
compatible geometric structures. More precisely, given a complex manifold $\left( M,j\right) $, six families of distinguished generalized complex or
paracomplex structures were defined and studied on $M$. Each one of them
interpolates between two geometric structures on $M$ compatible with $j$,
for instance, between totally real foliations and K\"{a}hler structures, or between hypercomplex and $\mathbb{C}$-symplectic structures. In the same article, a similar analysis was carried out for a symplectic manifold $\left( M,\omega \right) $. One had for instance the notion of a structure generalizing $\mathbb{C}$-symplectic structures and bi-Lagrangian foliations (both compatible with $\omega $).

The purpose of this note is to obtain analogous results, but now starting
from a pseudo Riemannian manifold $\left( M,g\right) $. As a by-product, we note that the search for nontrivial examples for this new structure can
bring about a better understanding of some manifolds, in the same way, for
instance, that generalized complex structures shed light on the geometry of nil- and solvmanifolds \cite{Cav,Bar}.

In Section \ref{P} we recall the definitions and properties of generalized
complex or paracomplex structures. In Subsection \ref{S2.1} we consider geometric structures on $M$ compatible with $g$, which we call integrable $\left(\lambda ,0\right) $- or $\left( 0,\ell \right) $-structures, with $\lambda,\ell =\pm 1$; for instance, $\lambda =-1$ and $\ell =-1$ give us
anti-Hermitian and almost pseudo K\"{a}hler structures, respectively. 
This nomenclature will allow us to define families of
generalized complex or paracomplex structures on $M$, called integrable $\left( \lambda ,\ell \right) $-structures, which in a certain sense,
specified in Theorem \ref{THMriemannian}, interpolate between integrable $\left( \lambda ,0\right) $- and $\left( 0,\ell \right) $-structures on $M$.
In Section \ref{SubS} we give explicitly the typical fibers of the
associated twistor bundles. The last section is devoted to examples: We give a curve of integrable $\left( 1,-1\right) $-structures on the manifold of ellipses endowed with a homogeneous Riemannian metric, joining a K\"{a}hler structure with a Riemannian product structure. Also, we find compatible integrable $\left( -1,-1\right) $-structures on certain 6-dimensional pseudo Riemannian nilmanifolds, although they do not admit the extremal integrable $\left( -1,0\right) $- and $\left( 0,-1\right) $-structures (all in the left invariant context).

\section{Preliminaries}\label{P} 

\subsection{Generalized complex and paracomplex structures}

We recall from the seminal work \cite{gcg} the definitions
and basic facts on generalized complex structures, and on generalized
paracomplex structures from \cite{Wade}. A unified approach can be found in \cite{Izu}.

Let $M$ be a smooth manifold (by smooth we mean of class $C^{\infty }$; all the objects considered will belong to this class). The extended tangent bundle is the vector bundle $\mathbb{T}M=TM\oplus TM^{\ast }$ over $M$. A canonical split pseudo Riemannian structure on $\mathbb{T}M$ is defined by 
\begin{equation}\label{b}
b\left( u+\sigma ,v+\tau \right) =\tau \left( u\right) +\sigma \left(
v\right) \text{,}
\end{equation}
for smooth sections $u+\sigma ,v+\tau $ of $\mathbb{T}M$. The \emph{Courant bracket} of these sections \cite{Courant} is given by 
\begin{equation*}
\left[ u+\sigma ,v+\tau \right] =\left[ u,v\right] +\mathcal{L}_{u}\tau -%
\mathcal{L}_{v}\sigma -\tfrac{1}{2}d\left( \tau \left( u\right) -\sigma
\left( v\right) \right) \text{,}
\end{equation*}%
where $\mathcal{L}$ denotes the Lie derivative.

A real linear isomorphism $S$ with $S^{2}=\lambda $ id, $\lambda =\pm 1$, is
called \emph{split} if tr~$S=0$ (equivalently, if the dimension of the 
$\pm \sqrt{\lambda}$-eigenspaces of $S$ coincide); this is always the case if $\lambda =-1$.

For $\lambda =\pm 1$, let $S$ be a smooth section of End\thinspace $\left( 
\mathbb{T}M\right) $ satisfying 
\begin{equation}\label{est.gen.}
S^{2}=\lambda \;\text{id, }S\text{ is split and skew-symmetric for }b
\end{equation}
and such that the set of smooth sections of the 
$\pm \sqrt{\lambda }$-eigenspace of $S$ is closed under the Courant bracket (if $\lambda =-1$,
this means as usual closedness under the $\mathbb{C}$-linear extension of
the bracket to sections of the complexification of $\mathbb{T}M$). Then, for 
$\lambda =-1$ (respectively, $\lambda =1$), $S$ is called a \emph{%
generalized complex} (respectively, \emph{generalized paracomplex})
structure on $M$. Notice that in \cite{Wade} the latter is not required to
be split.

We also have the notion of a $\left( +\right) $\emph{-generalized paracomplex}
structure $S$. It is the same as a generalized paracomplex structure, but
closedness under the Courant bracket is required only for sections of the
1-eigendistributions of $S$.

\subsection{Geometric structures compatible with a pseudo Riemannian metric}\label{S2.1}

Let $(M,g)$ be a pseudo Riemannian manifold. We consider the following
integrable geometric structures on $M$ compatible with $g$. Basically, they
are rotations and reflections in each tangent space of $M$ which are
isometries or anti-isometries for $g$ and satisfy certain integrability
conditions. The reason of the names integrable $(\lambda ,0)$- or $(0,\ell )$%
-structures will become apparent in Theorem \ref{THMriemannian}.

A \emph{product structure} on the smooth manifold $M$ is a tensor field $r$ of type 
$\left( 1,1\right) $ on $M$ with $r^{2}=$ id such that the
eigendistributions $D\left( \delta \right) $ of eigenvalues $\delta =\pm 1$
are integrable. Notice that the cases $r=\pm $ id are trivial. If $\text{dim}\,D(-1)=\text{dim}\,D(1)$, 
then the product structure $r$ is called a \emph{paracomplex structure}. 

\medskip

\noindent\textbf{Pseudo Riemannian product structure on $(M,g)$} \cite{Yano}. It is
given by a product structure $r$ on $M$ symmetric with respect to $g$ (or
equivalently, $r$ is an isometry of $g$).

Equivalently, it is given by an ordered pair of orthogonal nondegenerate foliations
(nondegenerate means that the restriction of $g$ to the leaves is
nondegenerate). The leaves of the respective foliations are the integral
submanifolds of the distributions $D\left( 1\right) $ and $D\left( -1\right) 
$. Using the properties of $r$ we have that $D\left( 1\right) $ and $D\left(
-1\right) $ are orthogonal, and hence nondegenerate, since they intersect
only at zero.

We call this structure an \textbf{integrable $\left( 1,0\right) $-structure }%
on $\left( M,g\right) $.

\medskip

\noindent\textbf{Anti-Hermitian structure on $(M,g)$} \cite{AK, Oproiu}. It is given by a complex
structure $j$ on $M$ which is symmetric with respect to $g$.

In this case, $g$ has necessarily neutral signature, since $j$ turns out to
be a linear anti-isometry of $g$ and so it sends space-like tangent vectors
to time-like tangent vectors. If $j$ is additionally parallel, then $\left(M,g,j\right) $ is called an anti-K\"{a}hler manifold.

We call this structure an \textbf{integrable $\left( -1,0\right) $-structure 
}on $\left( M,g\right) $.

\medskip

\noindent\textbf{Almost para-K\"{a}hler structure on $(M,g)$}. It is given by a
symplectic form $\omega $ on $M$ such that $r^{2}=\id$, where $r$ is the
unique tensor field satisfying $\omega (\cdot ,\cdot )=g(r\cdot ,\cdot )$.

One has that the tensor field $r$ is skew-symmetric for $g$. The
eigendistributions of $r$ are null for $g$ and Lagrangian for $\omega $,
hence $r$ is split and $g$ has neutral signature, since the dimension of a
null subspace of a Euclidean space of signature $\left( p,q\right) $ is less
than or equal to min~$\left( p,q\right) $ (Proposition 2.45 in \cite{Harvey}).
In the terminology of \cite{HarveyL} this structure would be an \textbf{%
almost K\"{a}hler }$\mathbb{D}$\textbf{-manifold.} If additionally the eigendistributions of 
$r$ are integrable, we have a bi-Lagrangian foliation \cite{bil1}.

We call this structure an \textbf{integrable $\left( 0,1\right) $-structure }%
on $\left( M,g\right) $.

\medskip

\noindent\textbf{Almost pseudo K\"{a}hler structure on $(M,g)$}. It is given by a
symplectic form $\omega $ on $M$ such that $j^{2}=-\id$, where $j$ is the
unique tensor field satisfying $\omega (\cdot ,\cdot )=g(j\cdot ,\cdot )$.

The tensor field $j$ is skew-symmetric and also a linear isometry for $g$.
Hence, $g$ must have even signature. If $j$ is additionally a complex
structure, then $\left( M,g,j\right) $ is a pseudo K\"{a}hler manifold.

We call this structure an \textbf{integrable $\left( 0,-1\right) $-structure 
}on $\left( M,g\right) $.

\medskip

\noindent\textbf{Nondegenerate foliation on $\left( M,g\right)$}.  It is given by a tensor field $r$ of type $\left(1,1\right)$ symmetric with respect to $g,$ with $r^{2}=$ id, such that $D\left( 1\right)$ is integrable. The nondegeneracy refers to the fact 
that $g$ induces a pseudo Riemannian metric on the leaves.

We call this structure an $\left( +\right) $-\textbf{integrable $\left(
1,0\right) $-structure }on $\left( M,g\right) $.

\section{Generalized geometric structures on pseudo Riemannian manifolds}\label{S2}

\subsection{Integrable $(\lambda, \ell)$-structures on $(M,g)$}

Given a bilinear form $c$ on a real vector space $V$, let $c^{\flat }\in $
End\thinspace $\left( V,V^{\ast }\right) $ be defined by $c^{\flat }\left(
u\right) \left( v\right) =c\left( u,v\right) $. The form $c$ is symmetric
(respectively, skew-symmetric) if and only if $\left( c^{\flat }\right)
^{\ast }=c^{\flat }$ (respectively, $\left( c^{\flat }\right) ^{\ast
}=-c^{\flat }$).

A \emph{paracomplex structure} $r$ \emph{on a vector bundle} $E\rightarrow M$, $E\neq TM$, over
a smooth manifold $M$ is a smooth tensor field of type $\left( 1,1\right) $
on $E$ satisfying $r^{2}=$ id and that $r_{p}$ is split for any $p\in M$.

\begin{definition}\label{Ik}
Let $\left( M,g\right) $ be a pseudo Riemannian manifold. For $k=-1$ or $k=1$, 
let $I_{k}$ be the complex, respectively paracomplex structure on $\mathbb{T}%
M$ given by 
\begin{equation*}
I_{k}=\left( 
\begin{array}{cc}
0 & k\left( g^{\flat }\right) ^{-1} \\ 
g^{\flat } & 0%
\end{array}%
\right) \text{.}
\end{equation*}
\end{definition}

\begin{remark} \label{Iksym} \emph{For further reference we verify that $I_{k}$ is symmetric
for $b$ defined in (\ref{b}). It suffices to check that $g^{\flat }\left( u\right)
\left( v\right) =g^{\flat }\left( v\right) \left( u\right) $ for all vector
fields $u,v$ on $M$ and $\tau \left( \left( g^{\flat }\right) ^{-1}\left(
\sigma \right) \right) =\sigma \left( \left( g^{\flat }\right) ^{-1}\left(
\tau \right) \right) $ for all one forms $\sigma ,\tau $ on $M$. Both
assertions follow from the symmetry of $g$ (for the second one, use the fact
that $\sigma =g^{\flat }\left( x\right)$ and $\tau =g^{\flat }\left(
y\right)$ for some vector fields $x,y$ on $M$).}
\end{remark}

Now we introduce four families of generalized geometric structures on $(M,g)$
interpolating between some of the structures listed in the previous
section.

\begin{definition}
\label{Def} Let $\left( M,g\right) $ be a pseudo Riemannian manifold.
Given $\lambda =\pm 1$ and $\ell =\pm 1$, a generalized complex structure $S$
\emph{(}for $\lambda =-1$\emph{)} or a generalized paracomplex structure $S$ 
\emph{(}for $\lambda =1$\emph{)} on $M$ is said to be an \textbf{integrable }%
$\left(\lambda ,\ell \right) $\textbf{-structure} on $\left( M,g\right)$ if 
\begin{equation}  \label{eqdef}
SI_{k}=-I_{k}S,
\end{equation}
where $k=-\lambda \ell$.
\end{definition}

We call $\mathcal{S}_{g}\left( \lambda ,\ell \right) $ the set of all
integrable $\left( \lambda ,\ell \right) $-structures on $\left( M,g\right)$.

\begin{example}\label{RQ}
\emph{If $s$ and $\omega $ are integrable $\left( \lambda ,0\right) $- and $\left(
0,\ell \right) $-structures on $\left( M,g\right) $, respectively, then easy
computations show that 
\begin{equation*}
R=\left( 
\begin{array}{cc}
s & 0 \\ 
0 & -s^{\ast }%
\end{array}%
\right) \ \ \ \ \ \ \ \ \ \text{and}\ \ \ \ \ \ \ \ \ \ Q=\left( 
\begin{array}{cc}
0 & \lambda (\omega ^{\flat })^{-1} \\ 
\omega ^{\flat } & 0%
\end{array}%
\right)
\end{equation*}%
belong to $\mathcal{S}_{g}\left( \lambda ,\ell \right) $. They are well
known to be generalized complex or paracomplex structures on $M$. In order
to verify (\ref{eqdef}) notice for instance that $s$ is symmetric for $g$ if
and only if $g^{\flat }\circ s=s^{\ast }\circ g^{\flat }$ and that $r$ and $%
j $ are as in the definitions of integrable $\left( 0,1\right) $- and $%
\left( 0,-1\right) $-structures if and only if $(\omega ^{\flat
})^{-1}\circ g^{\flat }=\ell (g^{\flat })^{-1}\circ \omega ^{\flat }$.
}
\end{example}


\begin{remark}
\emph{Analogously as in \cite{Salvai}, one can define $(+)$-integrable $(1,\ell)$-structures on $(M,g)$, for $\ell=\pm 1$.}
\end{remark}

The following simple theorem, along with Theorem \ref{fiber} below, contributes 
to render the notion of  an integrable $(\lambda, \ell)$-structure (Definition \ref{Def}) 
appropriate and relevant. 

\begin{theorem}
\label{THMriemannian} Let $\left( M,g\right) $ be a pseudo Riemannian
manifold. For $\lambda =\pm 1,\ell =\pm 1$, integrable $\left( \lambda ,\ell
\right) $-structures on $\left( M,g\right) $ interpolate between integrable $%
\left( \lambda ,0\right) $- and $\left( 0,\ell \right) $-structures on $%
\left( M,g\right) $, that is, if 
\begin{equation*}
R=\left( 
\begin{array}{cc}
s & 0 \\ 
0 & t
\end{array}
\right) \ \ \ \ \ \ \ \ \ \text{and}\ \ \ \ \ \ \ \ \ \ Q=\left( 
\begin{array}{cc}
0 & p \\ 
\omega ^{\flat } & 0
\end{array}
\right)
\end{equation*}
belong to $\mathcal{S}_{g}\left( \lambda ,\ell \right) $, then $s$ and $
\omega $ are integrable $\left( \lambda ,0\right) $- and $\left( 0,\ell
\right) $-structures on $\left( M,g\right) $, respectively.
\end{theorem}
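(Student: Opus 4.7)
The plan is to unpack, one by one, the defining conditions of $\mathcal{S}_g(\lambda,\ell)$ for the block-diagonal $R$ and the block-anti-diagonal $Q$, and to check that in each case they translate directly into the defining conditions of an integrable $(\lambda,0)$-structure for $s$ or of an integrable $(0,\ell)$-structure for $\omega$. In spirit the proof is the converse of the computation indicated in Example~\ref{RQ}; the blocks $R$ and $Q$ can be treated independently.

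For $R$, I would first read $R^{2}=\lambda\,\id$ off the blocks as $s^{2}=\lambda\,\id$, and then show that skew-symmetry of $R$ for $b$ in (\ref{b}) forces $t=-s^{\ast}$ by pairing with pure vector-field and pure one-form arguments. Substituting this and the block form of $I_{k}$ from Definition~\ref{Ik} into the anticommutation relation $RI_{k}=-I_{k}R$ with $k=-\lambda\ell$ reduces, after an elementary block computation, to $g^{\flat}\circ s=s^{\ast}\circ g^{\flat}$, i.e.\ $s$ is symmetric for $g$. Combined with $s^{2}=\lambda\,\id$ this gives the algebraic part of a pseudo Riemannian product structure (if $\lambda=1$) or of an anti-Hermitian almost complex structure (if $\lambda=-1$). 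For the integrability, I would identify the $\pm\sqrt{\lambda}$-eigenbundle of $R$ (on $\mathbb{T}M$ or its complexification) with the direct sum of the $\pm\sqrt{\lambda}$-eigenbundle of $s$ and the annihilator of the opposite eigenbundle of $s$; since the Courant bracket coincides with the Lie bracket on pure vector-field sections, Courant-involutivity of this eigenbundle forces Lie-involutivity of the eigenbundles of $s$, which for $\lambda=1$ is the integrability of the two foliations and for $\lambda=-1$ is the Newlander--Nirenberg integrability of $s$. Hence $s$ is an integrable $(\lambda,0)$-structure.

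For $Q$, $Q^{2}=\lambda\,\id$ yields $p\,\omega^{\flat}=\omega^{\flat}\,p=\lambda\,\id$, so $\omega$ is nondegenerate and $p=\lambda(\omega^{\flat})^{-1}$; skew-symmetry of $Q$ for $b$ forces $\omega$ to be skew, hence a genuine 2-form. A second short matrix computation of $QI_{k}=-I_{k}Q$ with $k=-\lambda\ell$ shows that $J:=(g^{\flat})^{-1}\omega^{\flat}$ satisfies $J^{2}=\ell\,\id$; this is precisely the tensor $r$ (for $\ell=1$) or $j$ (for $\ell=-1$) entering the definitions of integrable $(0,\ell)$-structures. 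To extract $d\omega=0$ from Courant-involutivity I would write the eigenbundles of $Q$ concretely as $\{u\pm\omega^{\flat}u:u\in TM\}$ when $\lambda=1$ and as $\{u\mp i\,\omega^{\flat}u:u\in T_{\mathbb{C}}M\}$ when $\lambda=-1$, compute the Courant bracket of two such sections, and invoke the identity $\mathcal{L}_{u}\iota_{v}\omega-\mathcal{L}_{v}\iota_{u}\omega+d(\omega(u,v))-\iota_{[u,v]}\omega=\iota_{v}\iota_{u}\,d\omega$ to see that the one-form part of the bracket lies back in the eigenbundle if and only if $d\omega(u,v,\cdot)=0$ for every $u,v$, i.e.\ $d\omega=0$. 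Together with $J^{2}=\ell\,\id$, this exhibits $\omega$ as an integrable $(0,\ell)$-structure.

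The split condition is automatic in both cases, since $\mathrm{tr}\,R=\mathrm{tr}\,s-\mathrm{tr}\,s^{\ast}=0$ and $\mathrm{tr}\,Q=0$ by the block form, so no extra check is needed. The one genuinely delicate point, and the expected main obstacle, is the translation from Courant-involutivity of the eigenbundles of $Q$ to $d\omega=0$: one has to handle the real ($\lambda=1$) and complex ($\lambda=-1$) eigenbundles uniformly and carefully track the $-\tfrac{1}{2}\,d(\tau(u)-\sigma(v))$ correction term in the Courant bracket before the Cartan-type identity above can be invoked. Everything else reduces to block-matrix arithmetic and the basic symmetry/skew-symmetry identities of $g^{\flat}$ and $\omega^{\flat}$.
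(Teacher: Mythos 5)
Your proposal is correct and follows essentially the same route as the paper: the decisive step in both is the block computation of the anticommutation with $I_{k}$, yielding $g^{\flat}\circ s=s^{\ast}\circ g^{\flat}$ and $(\omega^{\flat})^{-1}\circ g^{\flat}=\ell\,(g^{\flat})^{-1}\circ\omega^{\flat}$, which via Example \ref{RQ} gives the compatibility with $g$. The only difference is that the paper simply cites \cite{gcg}, \cite{Wade}, \cite{Izu} for the structural facts you rederive by hand (that $t=-s^{\ast}$, $p=\lambda(\omega^{\flat})^{-1}$, and that Courant involutivity forces integrability of $s$ and $d\omega=0$), so your write-up is a self-contained expansion of the same argument rather than a different one.
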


\begin{proof}
We call $q=\omega ^{\flat }$. It is well known from \cite{gcg} and \cite%
{Wade} (see also \cite{Izu}) that if $R$ and $Q$ as above are both generalized complex
(respectively paracomplex)\ structures, then $s$ is a complex or a product
structure on $M$ and $\omega $ is a closed $2$-form. Also, that $t=-s^{\ast
} $ and $p=-q^{-1}$ (respectively, $p=q^{-1}$).

Now, since $R$ and $Q$ anti-commute with $I_{k}$ ($k=-\lambda \ell $), one
has that $g^{\flat }\circ s=s\circ g^{\flat }$ and $(\omega ^{\flat
})^{-1}\circ g^{\flat }=\ell (g^{\flat })^{-1}\circ \omega ^{\flat }$, and
so, by the equivalent statements in Example \ref{RQ}, we have that $s$ and $%
\omega $ are integrable $\left( \lambda ,0\right) $- and $\left( 0,\ell
\right) $-structures, respectively.
\end{proof}

\subsection{Integrable $(\lambda, \ell)$-structures on $(M, g)$ in classical terms}

\begin{proposition}
\label{classical} An integrable $(\lambda ,\ell )$-structure $S$ on a pseudo
Riemannian manifold $(M,g)$ has the form 
\begin{equation}
S=\left( 
\begin{array}{cc}
A & \lambda \ell B(g^{\flat })^{-1} \\ 
g^{\flat }B & -A^{\ast }%
\end{array}%
\right) \text{,}  \label{Sclassical}
\end{equation}%
where $A$ and $B$ are endomorphisms of $TM$ satisfying 
\begin{equation*}
\lambda A^{2}+\ell B^{2}=\id\text{,\ \ \ \ \ \ \ \ }AB-BA=0\text{,\ \ \ \ \
\ \ \ }g^{\flat }A=A^{\ast }g^{\flat }\text{.}
\end{equation*}
\end{proposition}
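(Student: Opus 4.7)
The plan is to translate the defining algebraic properties of $S$ into block-matrix conditions with respect to $\mathbb{T}M = TM \oplus T^*M$. Since the proposition is a purely pointwise statement, the Courant bracket integrability plays no role; only $S^2 = \lambda\,\id$, skew-symmetry of $S$ for $b$, and the anti-commutation $SI_k = -I_k S$ with $k = -\lambda\ell$ will be needed.

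First I would write $S = \bigl(\begin{smallmatrix} A & \beta \\ \gamma & D \end{smallmatrix}\bigr)$ with $A \in \mathrm{End}(TM)$, $\beta : T^*M \to TM$, $\gamma : TM \to T^*M$ and $D \in \mathrm{End}(T^*M)$. Expanding $b(S(u+\sigma),\,v+\tau) + b(u+\sigma,\,S(v+\tau)) = 0$ using \eqref{b} and reading off the four independent scalar components forces $D = -A^*$ together with the skew-symmetry of $\beta$ and $\gamma$ as bilinear forms on $T^*M$ and $TM$ respectively. Next I would compute $SI_k$ and $I_k S$ in block form and equate them up to sign: the off-diagonal blocks give $g^\flat A = A^* g^\flat$ (the $g$-symmetry of $A$), while the diagonal blocks give $\beta = -k\,(g^\flat)^{-1} \gamma\,(g^\flat)^{-1}$. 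Defining $B := (g^\flat)^{-1} \gamma \in \mathrm{End}(TM)$ and using $-k = \lambda\ell$, this becomes $\gamma = g^\flat B$ and $\beta = \lambda\ell\, B(g^\flat)^{-1}$, which is exactly the shape \eqref{Sclassical}.

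Finally, I would impose $S^2 = \lambda\,\id$ block by block. The $(1,1)$ entry yields $A^2 + \beta\gamma = \lambda\,\id$; since $\beta\gamma = \lambda\ell\, B^2$, multiplying by $\lambda$ gives $\lambda A^2 + \ell B^2 = \id$. The $(1,2)$ entry yields $A\beta = \beta A^*$, which after substituting $\beta = \lambda\ell\, B(g^\flat)^{-1}$ and using $(g^\flat)^{-1} A^* = A(g^\flat)^{-1}$ (equivalent to the $g$-symmetry of $A$) reduces to $AB = BA$; the remaining two blocks are then automatic by $g^\flat$-conjugation. The only real difficulty is the bookkeeping needed to push every $\beta$- and $\gamma$-relation through $g^\flat$ so that it becomes a single relation on the endomorphisms $A$ and $B$ of $TM$; I do not anticipate needing any deeper ingredient.
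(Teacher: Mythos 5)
Your proof is correct and follows essentially the same route as the paper: block-decompose $S$, use skew-symmetry for $b$ to force $D=-A^{\ast}$ and the skewness of the off-diagonal blocks, use anti-commutation with $I_{k}$ to express both off-diagonal blocks through a single $B=(g^{\flat})^{-1}\gamma$, and then read the three identities off $S^{2}=\lambda\,\id$. The only difference is that the paper imports the general block form \eqref{Sgeneralizada} and the conditions \eqref{conditionGen} from \cite{Crainic}, whereas you re-derive that part by hand (and correctly note that the last two blocks of $S^{2}=\lambda\,\id$ are automatic by $g^{\flat}$-conjugation), so your argument is self-contained where the paper's relies on a citation.
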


\begin{proof}
For a bilinear map $\pi :V^{\ast }\times V^{\ast}\rightarrow \mathbb{R}$,
let $\pi ^{\sharp }:V^{\ast }\rightarrow V$ be defined by $\eta ( \pi
^{\sharp }( \xi ) ) =\pi (\xi ,\eta ) $, for all $\xi ,\eta \in V^{\ast }$.

Since $S$ is a generalized complex (for $\lambda =-1$) or paracomplex
structure (for $\lambda =1$), by \cite{Crainic} (see also \cite{Izu}) one
has 
\begin{equation}
S=\left( 
\begin{array}{cc}
A & \pi ^{\sharp } \\ 
\theta ^{\flat } & -A^{\ast }%
\end{array}%
\right) \text{,}  \label{Sgeneralizada}
\end{equation}%
where $\theta $ and $\pi $ are skew-symmetric, and $A$ satisfies 
\begin{equation}
A^{2}+\pi ^{\sharp }\theta ^{\flat }=\lambda \id\text{,\ \ \ \ \ \ \ \ }%
\theta ^{\flat }A=A^{\ast }\theta ^{\flat }\text{, \ \ \ \ \ and\ \ \ \ \ \
\ }\pi ^{\sharp }A^{\ast }=A\pi ^{\sharp }\text{.}  \label{conditionGen}
\end{equation}

Now, as $S$ anti-commutes with $I_{k}$, we have that 
\begin{equation}
g^{\flat }\pi ^{\sharp }=\lambda \ell \theta ^{\flat }(g^{\flat })^{-1}\text{%
\ \ \ \ \ \ \ \ and\ \ \ \ \ \ \ \ }g^{\flat }A=A^{\ast }g^{\flat }\text{.}
\label{simplifycond}
\end{equation}

By putting $B=(g^{\flat })^{-1}\theta ^{\flat }$, we have $\pi ^{\sharp
}=\lambda \ell B(g^{\flat })^{-1}$ and so (\ref{Sclassical}) holds. Now $%
\theta ^{\flat }A=A^{\ast }\theta ^{\flat }$ and $g^{\flat }A=A^{\ast
}g^{\flat }$ yield $AB-BA=0$. Finally, from $A^{2}+\pi ^{\sharp }\theta
^{\flat }=\lambda \id$ and $\pi ^{\sharp }\theta ^{\flat }=\lambda \ell
B^{2} $, we obtain $\lambda A^{2}+\ell B^{2}=\id$.
\end{proof}

\medskip

M. Crainic obtained in \cite{Crainic} (see also \cite{Izu}) conditions on $%
A,\theta $ and $\pi $ for $S$ as in (\ref{Sgeneralizada}) to be Courant
integrable. One can deduce conditions on $A$ and $B$ as in (\ref%
{Sclassical}) for the integrability of $S$.

\subsection{A signature associated to integrable $(1,1)$-structures on $(M,g)$}

\begin{proposition}
Let $S$ be an integrable $\left( 1,1\right) $-structure on a pseudo
Riemannian manifold $\left( M,g\right) $ of dimension $m$. Then the form $%
\beta _{S}$ on $\mathbb{T}M$ defined by 
\begin{equation*}
\beta _{S}\left( x,y\right) =b\left( SI_{-1}x,y\right)
\end{equation*}
is symmetric and has signature $\left( 2n,2m-2n\right) $ for some integer $n$
with $0\leq n\leq m$.
\end{proposition}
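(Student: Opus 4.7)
The plan is to derive all conclusions from properties of the single operator $T := SI_{-1}$ and from the invariance of $\beta_{S}$ under $I_{-1}$.

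First I would observe that since $\lambda = \ell = 1$ (hence $k = -\lambda\ell = -1$), combining $S^{2}=\id$, $I_{-1}^{2} = -\id$, and the defining relation $SI_{-1}=-I_{-1}S$ yields $T^{2}=\id$. In particular $T$ is invertible, so $\beta_{S}(x,y)=b(Tx,y)$ is nondegenerate, since $b$ is. For symmetry of $\beta_{S}$ I would chain three basic structural facts: $b$ is symmetric, $S$ is $b$-skew-symmetric by (\ref{est.gen.}), and $I_{-1}$ is $b$-symmetric by Remark \ref{Iksym}; a one-line manipulation using $I_{-1}S=-SI_{-1}$ then converts $\beta_{S}(y,x)$ into $\beta_{S}(x,y)$.

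The crucial observation is that $I_{-1}$ is a $\beta_{S}$-isometry, i.e.\ $\beta_{S}(I_{-1}x,I_{-1}y)=\beta_{S}(x,y)$. This drops out by a direct short computation using the same three structural facts together with $I_{-1}^{2}=-\id$ (the minus sign from $I_{-1}^{2}$ cancels the minus sign from the anticommutation $I_{-1}S=-SI_{-1}$).

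The signature claim then reduces to the pointwise linear-algebraic fact that any nondegenerate symmetric form on a real vector space preserved by a complex structure has even-dimensional positive and negative parts. To prove it I would fix an auxiliary Euclidean inner product $h$ on the fiber $\mathbb{T}_{p}M$ for which $I_{-1}$ is $h$-orthogonal (set $h(x,y):=h_{0}(x,y)+h_{0}(I_{-1}x,I_{-1}y)$ for any Euclidean $h_{0}$), write $\beta_{S}(x,y)=h(Ax,y)$ for a unique $h$-symmetric endomorphism $A$, and deduce $AI_{-1}=I_{-1}A$ from the $I_{-1}$-invariance of $\beta_{S}$ together with the $h$-orthogonality of $I_{-1}$. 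The spectral theorem diagonalizes $A$ with real eigenvalues, and each eigenspace, being $I_{-1}$-invariant, is a complex subspace with respect to $I_{-1}$ and hence has even real dimension. Grouping eigenspaces by sign of eigenvalue yields a signature $(2n,2m-2n)$ with $0\le n\le m$; continuity of $\beta_{S}$ and connectedness of $M$ then make $n$ independent of the point. The only genuinely conceptual step is spotting the $I_{-1}$-invariance of $\beta_{S}$ and introducing the auxiliary $I_{-1}$-compatible Euclidean metric; everything else is routine bookkeeping with the identities of Section \ref{S2}.
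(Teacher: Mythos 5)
Your proof is correct, but it follows a genuinely different route from the paper's. The paper also starts from the involution $T=SI_{-1}$, but instead of exploiting the $I_{-1}$-invariance of $\beta_S$ it decomposes $\mathbb{T}_pM$ into the $\pm1$-eigenspaces $D(\pm1)$ of $T$: it checks that $I_{-1}$ interchanges $D(1)$ and $D(-1)$ (so each has dimension $m$), that $D(1)$ and $D(-1)$ are $b$-orthogonal (so the restrictions $b^{\pm}$ are nondegenerate), and that $\beta_S=\pm b^{\pm}$ on $D(\pm1)$; since $b$ is split of signature $(m,m)$, if $b^{+}$ has signature $(n,m-n)$ then $b^{-}$ has signature $(m-n,n)$, and the sign flip on $D(-1)$ gives $(2n,2m-2n)$. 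Your argument instead isolates the clean structural fact that $I_{-1}$ is a $\beta_S$-isometry and then invokes the general linear-algebra principle that a nondegenerate symmetric form preserved by a complex structure has even positive and negative indices, proved via an auxiliary $I_{-1}$-compatible Euclidean metric and the spectral theorem. What each buys: your route is more self-contained and generic (it never uses that $b$ is split, only that it is nondegenerate, and it isolates a reusable invariance statement), and you also address the constancy of $n$ over a connected $M$, which the paper leaves implicit; the paper's route is shorter given the machinery already in place and, more importantly, produces the explicit $b$-orthogonal eigenspace decomposition of $T$ together with the dimensions and signatures of its pieces, which is exactly the normal form fed into the twistor-fiber computation of Theorem \ref{fiber} (the identification of $\sigma(1,1;n)$ with $O(m,\mathbb{C})/O(n,m-n)$). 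Both proofs are complete and correct.
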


\begin{proof} The form $\beta _{S}$ is symmetric since $S$ and $%
I_{-1}$ anti-commute and are skew-symmetric and symmetric for $b$ (see
Remark \ref{Iksym}), respectively.

We have that $\left( SI_{-1}\right) ^{2}=$ id. Let $
D(\pm 1)$ be the $\pm 1$-eigendistribution of $SI_{-1}$. We verify
that $I_{-1}\left( D(1)\right) =D(-1)$, so $D(1)$ and $D(-1)$ have both
dimension $m$.
Let $b^{\pm }=\left. b\right| _{D(\pm 1)\times
D(\pm 1)}$ and $\beta^{\pm  }=\left. \beta _{S}\right| _{D(\pm 1)\times D(\pm 1)}$. 
We compute that $\left. b\right| _{D(1)\times
D(-1)} \equiv 0$; in
particular, by the orthogonality lemma (2.30 in \cite{Harvey}), $b^{\pm }$
is nondegenerate. Suppose that $b^{+}$ has signature $\left( n,m-n\right)$. Hence $b^{-}$ has signature $\left( m-n,n\right) $ ($b$ is split). On the
other hand, we compute also that $b^{\pm  }=\pm \, \beta ^{\pm }$.
Therefore the signature of $\beta _{S}$ is $\left( 2n,2m-2n\right) $, as
desired. 
\end{proof}

\begin{definition}
An integrable $\left( 1,1\right) $-structure $S$ on $\left( M,g\right) $ as
above is called an \textbf{integrable} $\left( 1,1;n\right) $\textbf{%
-structure}, and we write \emph{sig\thinspace }$\left( S\right) =n$.
\end{definition}

The next proposition explains the meaning of sig\,$(S)$ for an extremal integrable $(1,1)$-structure on $M$ as 
in Example \ref{RQ}. 
\begin{proposition}
\label{signatura} \emph{a)} Let $r$ be an integrable $\left( 1,0\right) $%
-structure on $\left( M,g\right) $ and suppose that the metrics induced on $%
D\left( 1\right) $ and $D\left( -1\right) $ have signatures $\left(
p_{+},q_{+}\right) $ and $\left( p_{-},q_{-}\right) $, respectively. Then 
\begin{equation*}
R=\left( 
\begin{array}{cc}
r & 0 \\ 
0 & -r^{\ast }%
\end{array}%
\right)
\end{equation*}%
is an integrable $\left( 1,1;p_{+}+q_{-}\right) $-structure on $\left( M,g\right) $.

\smallskip

\emph{b)} Let $\omega $ be an integrable $\left( 0,1\right) $-structure on $%
\left( M,g\right)$. Then 
\begin{equation*}
Q=\left( 
\begin{array}{cc}
0 & \left( \omega ^{\flat }\right) ^{-1} \\ 
\omega ^{\flat } & 0%
\end{array}
\right)
\end{equation*}
is an $\left( 1,1;\frac{m}{2}\right)$-structure on $\left( M,g\right)$.
\end{proposition}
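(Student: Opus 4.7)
The approach is to apply the framework of the previous proposition: compute $SI_{-1}$ explicitly, locate its $\pm 1$-eigendistributions $D(\pm 1)\subset\mathbb{T}M$, then restrict $b$ and invoke $\beta^{\pm}=\pm b^{\pm}$.

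For part (a), I would first verify the hypothesis $RI_{-1}=-I_{-1}R$, which reduces to the identity $r^{\ast}g^{\flat}=g^{\flat}r$ (the $g$-symmetry of $r$). A direct matrix calculation then shows $(RI_{-1})^{2}=\id$ and that the $\pm 1$-eigendistributions admit the parametrizations
$D(\pm 1)=\{u\mp g^{\flat}(ru):u\in TM\}$. Pulling $b|_{D(+1)}$ back to $TM$ via these isomorphisms gives the bilinear form $(u,v)\mapsto -2g(ru,v)$. The auxiliary form $\tilde g(u,v):=g(ru,v)$ is a pseudo Riemannian metric on $M$, and the $g$-orthogonal splitting $TM=D_{r}(+1)\oplus D_{r}(-1)$ is also $\tilde g$-orthogonal because $r$ preserves each summand; on $D_{r}(\pm 1)$ one has $\tilde g=\pm g$, which determines the signature of $\tilde g$ and hence of $b^{+}$. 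A parallel calculation on $D(-1)$, combined with $\beta^{-}=-b^{-}$, yields the claimed signature of $\beta_{R}$.

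For part (b), I would compute $QI_{-1}$ using $\omega^{\flat}=g^{\flat}r$ together with the $g$-skew-symmetry of $r$ (forced by the skew-symmetry of $\omega$), obtaining $QI_{-1}=\mathrm{diag}(r,r^{\ast})$. Its $\pm 1$-eigendistributions in $\mathbb{T}M$ split as $D_{r}(\pm 1)\oplus E^{\ast}_{\pm}$, where $E^{\ast}_{\pm}\subset T^{\ast}M$ is the $\pm 1$-eigenspace of $r^{\ast}$. The essential new ingredient is that $D_{r}(\pm 1)$ are $g$-isotropic of dimension $m/2$ (since $r$ is $g$-skew with $r^{2}=\id$, as implicit in the almost para-K\"ahler hypothesis), so $b$ vanishes on each $D_{r}(\pm 1)$; a routine check shows that $E^{\ast}_{+}$ equals the annihilator $(D_{r}(-1))^{0}$, whence the pairing $D_{r}(+1)\times E^{\ast}_{+}\to\mathbb{R}$ induced by $b$ is nondegenerate. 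The matrix of $b|_{D(+1)}$ in an adapted basis is then of the block form $\bigl(\begin{smallmatrix}0 & P\\ P^{\top} & 0\end{smallmatrix}\bigr)$ with $P$ invertible, so its eigenvalues occur in pairs $\pm\sigma_{i}$ and its signature is $(m/2,m/2)$. The same holds on $D(-1)$, yielding total signature $(m,m)$ for $\beta_{Q}$.

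The main obstacle will be the sign and signature bookkeeping in part (a), where one must carefully reconcile the action of $r$ on each of its eigendistributions with the restriction of $g$ there; part (b) is essentially forced by the universal Lagrangian nature of $D_{r}(\pm 1)$ and should be more mechanical.
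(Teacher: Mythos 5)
Your strategy is correct and both parts go through. In part (a) you are essentially reproducing the paper's argument in different packaging: the paper never passes to the eigenspaces of $RI_{-1}$, but instead notes that $TM$ and $T^{\ast}M$ are $\beta_R$-orthogonal and that $\beta_R$ restricts to a copy of $-h$ on each, with $h=g(r\cdot,\cdot)$; both routes reduce to computing the signature of $h$ on the eigenspaces $D_r(\pm 1)$ of $r$, which is exactly your last step. The one place you defer --- the sign bookkeeping --- is genuinely delicate: with your (correct) formulas, $b^{+}\cong -2h$ and $\beta^{\pm}=\pm b^{\pm}$ give $\beta_R\cong -2h\oplus -2h$, so the positive index of $\beta_R$ comes out as twice the \emph{negative} index of $h$, namely $2(q_{+}+p_{-})$; reconcile this with the stated $p_{+}+q_{-}$ before declaring victory (note that the paper's own proof passes from $\beta_R=-(h\oplus h)$ to ``the signature of $\beta_R$ is twice the signature of $h$'' without comment). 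In part (b) your route is genuinely different from the paper's: the paper writes $\beta_Q$ out explicitly and exhibits an explicit $\beta_Q$-orthogonal basis $\{v_i\pm v_i^{\ast},\,u_i\mp u_i^{\ast}\}$ built from a $g$-orthonormal frame and its $\omega$-dual, counting space-like and time-like vectors by hand; you instead read off the split signature structurally from the hyperbolic pairing between $D_r(\pm 1)$ and the annihilator of the opposite eigenspace inside the eigenspaces of $QI_{-1}=\mathrm{diag}\,(r,r^{\ast})$. Your version is frame-independent and shorter; the paper's produces an explicit basis. One small correction in (b): $b$ vanishes on $D_r(\pm 1)$ simply because these sit inside $TM$, which is $b$-null --- the $g$-isotropy is irrelevant for that; what the isotropy (equivalently, $r$ being split) actually supplies is $\dim D_r(\pm 1)=m/2$, which you need for the block sizes in your hyperbolic matrix.
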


\begin{proof}
a) We compute 
\begin{equation*}
\beta _{R}\left( u+\sigma ,v+\tau \right) =-\left( g(ru,v)+\widehat{g}%
(r^{\ast }\sigma ,\tau )\right) \text{,}
\end{equation*}%
where $\widehat{g}$ is the symmetric bilinear form on $T^{\ast }M$ defined
by 
\begin{equation*}
\widehat{g}(\alpha ,\tau )=g((g^{\flat })^{-1}\alpha ,(g^{\flat })^{-1}\tau
).
\end{equation*}%
Let $h$ be the nondegenerate bilinear form on $TM$ given by $h(\cdot ,\cdot
)=g(r\cdot ,\cdot )$. Note that the signature of $\beta _{R}$ is twice the
signature of $h$, hence we compute the signature of $h$. Now, $r^{2}=\id$
and the eigendistributions $D(\pm 1)$ of $r$ are orthogonal with respect to $%
h$ and $g$. The signature of $h$ is equal to the sum of the signatures of $%
h|_{D\left( 1\right) \times D\left( 1\right) }$ and $h|_{D\left( -1\right)
\times D\left( -1\right) }$ and it can be seen that these are $\left(
p_{+},q_{+}\right) $ and $\left( q_{-},p_{-}\right) $,  respectively. Then,
the signature of $h$ is $\left( p_{+}+q_{-},q_{+}+p_{-}\right) $. Thus, sig\,$(R)=p_{+}+q_{-}$, as desired.

b) We compute 
\begin{equation*}
\beta _{Q}\left( u+\sigma ,v+\tau \right) =\widehat{g}(\omega ^{\flat
}u,\tau )-g((\omega ^{\flat })^{-1}\sigma ,v)\text{,}
\end{equation*}%
where $\widehat{g}$ is as in {a)}. Let $\{u_{1},\ldots ,u_{p},v_{1},\ldots
,v_{q}\}$ be a local orthonormal basis of $TM$ with respect to $g$ where $u_i$ 
is space-like (for $i=1,\ldots, p$) and $v_j$ is time-like (for $j=1,\ldots ,q$). 
Let $\{u_{1}^{\ast },\ldots ,u_{p}^{\ast },v_{1}^{\ast },\ldots ,v_{q}^{\ast
}\}$ be the dual basis of $T^{\ast }M$ with respect to $\omega $, i.e., $%
u_{i}^{\ast }=\omega ^{\flat }u_{i}$ and $v_{i}^{\ast }=\omega ^{\flat
}v_{i}$. It is easy to check that $\widehat{g}(\omega ^{\flat}u , \omega ^{\flat}v)=-g(u,v)$, 
for all $u,\, v\in TM$ and that 
\begin{equation*}
\{v_{1}+v_{1}^{\ast },\ldots ,v_{q}+v_{q}^{\ast },u_{1}-u_{1}^{\ast },\ldots
,u_{p}-u_{p}^{\ast },v_{1}-v_{1}^{\ast },\ldots ,v_{q}-v_{q}^{\ast
},u_{1}+u_{1}^{\ast },\ldots ,u_{p}+u_{p}^{\ast }\}
\end{equation*}%
is an orthogonal basis of $\beta _{Q}$ where the first $p+q$ vectors fields are
space-like and the remaining $p+q$ vectors fields are time-like.
\end{proof}

\section{The $\left( \protect\lambda ,\ell \right) $-twistor bundles over $\left( M,g\right) \label{SubS}$}

Let $\mathbb{L}$ denote the Lorentz numbers $a+\varepsilon b,$ $\varepsilon
^{2}=1$ and let $V$ be a vector space over $\mathbb{F}=\mathbb{R}$, $\mathbb{C}$, $\mathbb{L}$ or $\mathbb{H}$, where $\mathbb{H}=\mathbb{C}+\mathbf{j}
\mathbb{C}$ are the quaternions (we consider right vector spaces over $\mathbb{H}$). Let $O\left( m,n\right) $ be the group of automorphisms of the symmetric form of the Euclidean space of signature $\left( m,n\right) $ and
let $SO^{\ast }\left( m\right) $ be the group of automorphisms of an
anti-Hermitian form on $\mathbb{H}^{m}$ (in \cite{Harvey} it is called $%
SK\left( m,\mathbb{H}\right) $).

\begin{theorem}
\label{fiber} Let $\left( M,g\right) $ be a pseudo Riemannian manifold of
dimension $m$ and signature $\left( p,q\right) $. Then, integrable $\left(
\lambda ,\ell \right) $-structures on $\left( M,g\right) $ are smooth
sections of a fiber bundle over $M$ with typical fiber $G/H$, according to
the following table.
\begin{center}
\begin{tabular}{|c|c|c|c|c|}
\hline
$\lambda $ & $\ell $ & \emph{sig} & $G$ & $H$ \\ \hline\hline
$1$ & $1$ & n & $O\left( m,\mathbb{C}\right) $ & $O\left( n,m-n\right) $ \\ 
\hline
$1$ & $-1$ & - & $O(p,q)\times O(p,q)$ & $O(p,q)$ \\ \hline
$-1$ & $1$ & - & $O(p,p)\times O(p,p)$ & $O(p,p)$ \\ \hline
$-1$ & $-1$ & - & $O\left( m,\mathbb{C}\right) $ & $SO^{\ast }\left(
m\right) $ \\ \hline
\end{tabular}
\end{center}
(on the third row we have set $p = q$, since by Remark \ref{split} below, the existence of an integrable $(-1, 1)$-structure on $(M, g)$ forces $g$ to be split).
\end{theorem}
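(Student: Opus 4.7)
The strategy is to reduce the theorem to a pointwise algebraic classification: at each $p \in M$, the set of algebraic $(\lambda,\ell)$-structures on $(T_pM,g_p)$ forms a homogeneous space $G/H$ as in the table (for $(1,1)$, additionally indexed by $\mathrm{sig}$). The twistor bundle is then the associated bundle to the orthonormal frame bundle of $(M,g)$ with typical fiber $G/H$; integrable $(\lambda,\ell)$-structures are its sections satisfying the Courant condition. By Proposition \ref{classical} together with the observation that $b$-skew-symmetry of $S$ forces $g_p$-skew-symmetry of $B$, a pointwise structure corresponds bijectively to a pair $(A,B)$ of endomorphisms of $T_pM$ with $A$ $g_p$-symmetric, $B$ $g_p$-skew-symmetric, $AB = BA$, and $\lambda A^2 + \ell B^2 = \id$. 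I would then treat the four cases separately.

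For the \emph{real} cases $(\lambda\ell = -1)$, set $T := A + B$ and write $T^\top$ for the $g_p$-adjoint. Commutativity yields $TT^\top = (A+B)(A-B) = A^2 - B^2 = \lambda\,\id$, while $T^\top = A - B$ follows from the symmetry/skew-symmetry hypotheses. Hence $T$ is a $g_p$-isometry if $\lambda = 1$, parametrizing the fiber by $O(p,q)$; or a $g_p$-anti-isometry if $\lambda = -1$, forcing $p=q$ (the parenthetical remark) and yielding a torsor over $O(p,p)$. The inverse bijection is $A = (T + T^\top)/2$, $B = (T - T^\top)/2$, and the standard diffeomorphism $O(p,q) \cong (O(p,q)\times O(p,q))/O(p,q)$ via $(g_1,g_2)\Delta \mapsto g_1g_2^{-1}$ (with $\Delta$ the diagonal) gives the listed fiber.

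For the \emph{complex} cases $(\lambda = \ell)$, complexify to $V_\mathbb{C} := T_pM \otimes_\mathbb{R} \mathbb{C}$ with the $\mathbb{C}$-bilinear extension $g_\mathbb{C}$ and set $C := A + iB$. Symmetry and skew-symmetry of $A$ and $B$ give $C^\top = \bar C$, and $C\bar C = A^2 + B^2 + i[B,A] = \lambda\,\id$; conversely, these two identities recover $A, B$ and all their algebraic constraints (note $C\bar C = \lambda\,\id$ forces $[C,\bar C]=0$, hence $[A,B]=0$). Consider the action of $O(m,\mathbb{C})$ on $\mathcal{C}_\lambda := \{C : C^\top = \bar C,\ C\bar C = \lambda\,\id\}$ by twisted conjugation $g\cdot C = gC\bar g^{-1}$; a direct check shows this action preserves $\mathcal{C}_\lambda$. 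Attached to $C$ is the sesquilinear form $h_C(u,v) := g_\mathbb{C}(Cu,\bar v)$ on $V_\mathbb{C}$, which is Hermitian for $\lambda = 1$ and skew-Hermitian for $\lambda = -1$. Sylvester's law (respectively the classification of nondegenerate skew-Hermitian forms on $\mathbb{C}^m$) shows the orbits are classified by the signature of $h_C$, with stabilizer the real form $O(n, m-n) \subset O(m,\mathbb{C})$ for $\lambda = 1$ (signature $(n, m-n)$), and $SO^\ast(m)$ for $\lambda = -1$ (where the orbit is unique).

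The main obstacle is the identification of invariants in the $(1,1)$ case: verifying that the Hermitian signature of $h_C$ coincides with $\mathrm{sig}(S) = n$ defined via $\beta_S$, so that the twisted-conjugation orbits really are the $\mathrm{sig}$-level sets. This reduces to identifying the $\pm 1$-eigenspaces of $SI_{-1}$ inside $V_\mathbb{C}$ in terms of the $\pm i$-eigenspaces of $I_{-1}$ (the graphs of $\mp ig^\flat$) together with $C$, and then computing the restriction of $b$ to these subspaces and matching the signature against that of $h_C$. For $(-1,-1)$, the analogous obstacle is checking that the stabilizer coincides with the standard definition of $SO^\ast(m)$, handled by exhibiting a model element (for instance, $C = ij$ coming from the almost pseudo K\"ahler structure $Q$ of Example \ref{RQ}) and computing its stabilizer under twisted conjugation directly.
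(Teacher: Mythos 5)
Your route is genuinely different from the paper's. The paper never leaves the extended tangent space $\mathbb{E}=\mathbb{T}_pM$: it turns $\mathbb{E}$ into a module over $\mathbb{C}$ (for $k=-1$) or over the Lorentz numbers $\mathbb{L}$ (for $k=1$) via $I_k$, characterizes $\sigma(\lambda,\ell)$ by the single condition $b_k(Sx,Sy)=-\lambda\overline{b_k(x,y)}$ (Proposition \ref{char}), and then for $k=-1$ quotes the orbit classification of \cite{Salvai}, while for $k=1$ it runs an explicit computation in the null basis $e,\overline{e}$ of $\mathbb{L}$ with model elements, isotropy groups and a transitivity argument. You instead descend to $T_pM$ via Proposition \ref{classical} and package $(A,B)$ as $T=A+B$ or $C=A+iB$; your $T$ is essentially the paper's $e,\overline{e}$-decomposition in disguise, and it gives the rows $(1,-1)$ and $(-1,1)$ very economically as torsors over $O(p,q)$, resp.\ $O(p,p)$. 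Two things you must supply for completeness: the converse of Proposition \ref{classical} (every admissible pair $(A,B)$ does yield an element of $\sigma(\lambda,\ell)$, including splitness for $\lambda=1$, which follows from the anticommutation with $I_k$ as in the proof of Proposition \ref{char}), and the identification of your Hermitian signature with $\mathrm{sig}(S)$, which you correctly flag; the paper avoids that computation by staying on $\mathbb{E}$, where $\beta_S$ lives, and matching its normal form with the one in \cite{Salvai}. Note also that $h_C(u,v)=g_{\mathbb{C}}(Cu,\bar v)$ is not equivariant under $g\cdot C=gC\bar g^{-1}$; the equivariant form is $\eta_C(u,v)=g_{\mathbb{C}}(u,C\bar v)$, which has the same signature, so this is harmless but should be said.

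The one step that fails as written is in the $(-1,-1)$ case. From $C^{\top}=\bar C$ alone one gets $h_C(v,u)=g_{\mathbb{C}}(\bar C\bar u,v)=\overline{h_C(u,v)}$, so $h_C$ is Hermitian for \emph{both} values of $\lambda$, not skew-Hermitian for $\lambda=-1$; and even if it were skew-Hermitian, $ih$ is then Hermitian, so nondegenerate skew-Hermitian forms on $\mathbb{C}^m$ are likewise classified by a signature and "the orbit is unique" would not follow from their classification. The correct mechanism is that $C\bar C=-\id$ makes $\sigma_C(u)=C\bar u$ an antilinear map with $\sigma_C^2=-\id$ (a quaternionic structure compatible with $g_{\mathbb{C}}$), and $\eta_C(\sigma_Cu,\sigma_Cu)=-g_{\mathbb{C}}(C\bar u,u)=-\eta_C(u,u)$ shows that $\sigma_C$ interchanges the positive and negative definite subspaces, forcing the signature to be $(m/2,m/2)$; hence there is a single orbit, with stabilizer the real form of $O(m,\mathbb{C})$ commuting with $\sigma_C$, i.e.\ $SO^{\ast}(m)$. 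With that repair, and with the two flagged verifications actually carried out, your argument goes through and recovers the table.
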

Before proving the theorem we introduce some notation and present a proposition. Now we work at the algebraic level. We fix $p\in M$ and call $\mathbb{E}=\mathbb{T}_{p}M$. By abuse of notation, in the rest of the
subsection we write $b$ and $I_{k}$ instead of $b_{p}$ and $(I_{k})_{p}$.

Let $\sigma \left( \lambda ,\ell \right) $ denote the set of all $S\in $ End$%
\,_{\mathbb{R}}\left( \mathbb{E}\right) $ satisfying 
\begin{equation*}
S^{2}=\lambda \id,\,S\text{ is split and skew-symmetric for }b\text{ and }
SI_{k}=-I_{k}S\text{ with }k=-\lambda \ell \text{.}
\end{equation*}
Similarly, $\sigma \left( 1,1;n\right) $ consists, by definition, of the
elements $S$ of $\sigma \left( 1,1\right) $ with sig~$\left( S\right) =n$.

Note that $\left( \mathbb{E},I_{k}\right) $ is a vector space over $\mathbb{C}$ (respectively, $\mathbb{L}$) for $k=-1$ (respectively, $k=1$) via $\left(
a+ib\right) x=ax+bI_{-1}x$ (respectively, $(a+\varepsilon b)x=ax+bI_{1}x$).

\begin{proposition}
\label{char} Let $b_{-1}:\mathbb{E}\times \mathbb{E}\rightarrow \mathbb{C}$
and $b_{1}:\mathbb{E}\times \mathbb{E}\rightarrow \mathbb{L}$ be defined by 
\begin{equation*}
b_{-1}\left( x,y\right) =b\left( x,y\right) -ib\left( x,I_{-1}y\right) \ \ \ 
\text{ and \ \ \ }b_{1}\left( x,y\right) =b\left( x,y\right) +\varepsilon
b\left( x,I_{1}y\right) .
\end{equation*}%
Then $b_{-1}$ is $\mathbb{C}$-symmetric and $b_{1}$ is $\mathbb{L}$%
-symmetric \emph{(}with respect to $I_{-1},I_{1}$, respectively\emph{)}.

Also, if $S\in \emph{End}_{\mathbb{R}}(\mathbb{E})$ satisfies $S^{2}=\lambda %
\id$ then $S\in \sigma (\lambda ,\ell )$ if and only if 
\begin{equation}
b_{k}(Sx,Sy)=-\lambda \overline{b_{k}(x,y)}  \label{condition}
\end{equation}%
for any $x,y\in \mathbb{E}$ \emph{(}where $k=-\lambda \ell \emph{)}$.
\end{proposition}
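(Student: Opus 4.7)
The plan is essentially a direct algebraic unwinding of the definitions, relying on two facts established earlier: $b$ is symmetric and nondegenerate, and $I_{k}$ is $b$-symmetric (Remark \ref{Iksym}).

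First I would verify the symmetry/sesquilinearity claims. For $b_{-1}$, the identity $b_{-1}(y,x)=b_{-1}(x,y)$ follows at once from symmetry of $b$ combined with $b(I_{-1}x,y)=b(x,I_{-1}y)$; the $\mathbb{C}$-linearity reduces to checking $b_{-1}(I_{-1}x,y)=i\,b_{-1}(x,y)$, which is a two-line computation using $I_{-1}^{2}=-\id$. The argument for $b_{1}$ is identical with $\varepsilon$ replacing $i$ and $I_{1}^{2}=\id$; the sign change goes the other way but everything else is verbatim.

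For the forward direction of the main characterization, I would assume $S\in\sigma(\lambda,\ell)$ and expand $b_{k}(Sx,Sy)$. Skew-symmetry of $S$ for $b$ together with $S^{2}=\lambda\id$ yields $b(Sx,Sy)=-\lambda b(x,y)$, and combining this with $SI_{k}=-I_{k}S$ gives $b(Sx,I_{k}Sy)=\lambda b(x,I_{k}y)$. Substituting both into the definition of $b_{k}(Sx,Sy)$ recovers (\ref{condition}) after recognizing $\overline{b_{k}(x,y)}$.

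For the converse, suppose $S^{2}=\lambda\id$ together with (\ref{condition}). Separating the real and ``imaginary'' parts of (\ref{condition}) (i.e.\ the coefficients of $1$ and of $i$, respectively $\varepsilon$) produces the two real identities
\[
b(Sx,Sy)=-\lambda b(x,y),\qquad b(Sx,I_{k}Sy)=\lambda b(x,I_{k}y).
\]
Replacing $y$ by $Sy$ in the first and using $S^{2}=\lambda\id$ yields $b(Sx,y)+b(x,Sy)=0$, so $S$ is $b$-skew. Substituting this skew-symmetry into the second identity and using nondegeneracy of $b$ forces $SI_{k}S=-\lambda I_{k}$; multiplying on the right by $S$ gives $SI_{k}=-I_{k}S$. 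Finally for splitness: when $\lambda=-1$ it is automatic, and when $\lambda=1$ the just-derived anticommutation shows that the invertible operator $I_{k}$ interchanges the $\pm1$-eigenspaces of $S$, which must then have equal dimension. I expect this splitness step to be the only mildly delicate point of the argument, and it is immediate once the anticommutation is in hand.
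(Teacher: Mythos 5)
Your proposal is correct and follows essentially the same route as the paper's proof: the same expansion of $b_{k}(Sx,Sy)$ for the forward direction, the same separation into the two real identities (you merely perform the substitution $y\mapsto Sy$ after splitting into components rather than before) for the converse, and the same $I_{k}$-interchange argument for splitness when $\lambda=1$. No gaps.
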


\begin{proof}
Let us denote $\epsilon _{1}=\varepsilon $ and $\epsilon _{-1}=i$ (in
particular, $\epsilon _{k}^{2}=k$). From Definition \ref{Ik} and 
the fact that $I_{k}$ is symmetric for $b$ (see Remark \ref{Iksym}) and $I_{k}^2 = k\id$ we have that, for $k=\pm 1$,
\begin{equation*}
b_{k}(x,y) = b_{k}(y,x) \,\,\mbox{ and }\,\, \epsilon _{k}b_{k}\left( x,y\right)
=b_{k}(\epsilon_{k}x , y )
\end{equation*}
for all $x,y \in \mathbb{E}$. Then the first assertion is true. 

Now we prove the second assertion. Suppose first that $S\in \sigma \left(
\lambda ,\ell \right) $. We call $k=-\lambda \ell $. Since $S$ anti-commutes
with $I_{k}$ and $S$ is skew-symmetric for $b$, we have 
\begin{eqnarray*}
b_{k}( Sx,Sy ) & = & b(Sx , Sy ) + k\epsilon _{k}b(Sx,I_{k}Sy) \\
& = &-b(S^2x,y ) - k\epsilon _{k}b\left( Sx,SI_{k}y\right) \\
& = &-b(S^2x,y ) + k\epsilon _{k}b\left( S^2x,I_{k}y\right) \\
& = &-\lambda b(x,y) + \lambda k\epsilon_{k}b(x,I_{k}y) \\
& = &-\lambda (b(x,y) - k\epsilon_{k}b(x,I_{k}y)) \\
& = &-\lambda \overline{b_{k}(x,y)} \mbox{.}
\end{eqnarray*}%
Conversely, suppose that $S^{2}=\lambda \id$ and (\ref{condition}) holds.
Note that $b_{k}(Sx,y) = - \overline{b_{k}(x,Sy)}$; it implies that 
\begin{equation} \label{skew} 
b(Sx,y) = -b(x,Sy)  \hspace{1cm}\text{and} \hspace{1cm} b(Sx,I_{k}y) = b(x,I_{k}Sy). 
\end{equation}
From the first identity in (\ref{skew}), we have that $S$ is skew-symmetric for $b$. As a
consequence of (\ref{skew}) and the nondegeneracy of $b$ we have that 
$-SI_{k} = I_{k}S$.

We complete the proof by showing that $S$ is split in the case 
$\lambda =1$ ($S^{2}=\id$). Let $D(\delta )$ be the $\delta$-eigenspaces  
of $S$, with $\delta =\pm 1$, and consider the restriction of 
$I_{k}$ to $D(1)$. This restriction gives an isomorphism between $D(1)$ and $D(-1)$
(and hence they have the same dimension). Indeed, if $v\in D(\pm 1)$, as $S$
anti-commutes with $I_{k}$, we have $S(I_{k}v)=-I_{k}(Sv)=\mp I_{k}v$.
Therefore, $S\in \sigma \left( \lambda ,\ell \right)$, as desired.
\end{proof}

\begin{remark}\label{split}
	\emph{If $(M,g)$ admits an integrable $(-1,1)$-structure then $g$ must be split. Indeed, any $S\in \sigma\left( -1,1\right) $ is an anti-isometry of the nondegenerate symmetric form $h=\text{Im~}(b_{1})$,
		whose signature is twice that of $g$. Now, $S$ sends any \textit{space-like
			subspace} of $(\mathbb{E},h)$ to a \textit{time-like subspace}, and vice
		versa, and so $h$ is split and the assertion follows.}
\end{remark}

\begin{proof} \textbf{of Theorem \ref{fiber}} We consider first the case $-\lambda \ell =k=-1$. By Proposition \ref{char}
we have that $b_{-1}$ is $\mathbb{C}$-symmetric and nondegenerate. Now, by the
Basis Theorem \cite{Harvey}, there exist complex linear coordinates $\left(
\phi _{-1}\right) ^{-1}:\left( \mathbb{E},I_{-1}\right) \rightarrow \mathbb{C%
}^{m}$ such that $B_{-1}:=\left( \phi _{-1}\right) ^{\ast }b_{-1}$ has the
form 
\begin{equation*}
B_{-1}\left( Z,W\right) =Z^{t}W\text{,}
\end{equation*}%
where $Z,W\in \mathbb{C}^{m}$ are column vectors and the superscript $t$
denotes transpose.

Let $\widetilde{\Sigma }\left( \lambda ,\ell \right) $ be the subset of End$%
\,_{\mathbb{R}}\left( \mathbb{C}^{m}\right) $ corresponding to $\sigma
\left( \lambda ,\ell \right) $ via the isomorphism $\phi _{-1}$. By the
second statement of Proposition \ref{char}, $O\left( m,\mathbb{C}\right) $
(the Lie group preserving $B_{-1}$) acts by conjugation on $\widetilde{%
\Sigma }\left( 1,1\right) $ and $\widetilde{\Sigma }\left( -1,-1\right) $.

Now, the symmetric form $B_{-1}$ above coincides with the symmetric form $%
B_{+}$ defined in the proof of Theorem 3.9 in \cite{Salvai} (except for the
inessential fact that in that theorem the dimension of $M$ is even).
Moreover, the condition (\ref{condition}) in Proposition \ref{char} is the same as the condition 
(5) in \cite[Propostion 3.11]{Salvai}. Hence $\widetilde{\Sigma }\left( 1,1;n\right) $ and $\widetilde{\Sigma }\left(
-1,-1\right)$ correspond with the sets ${\Sigma }\left( +,+;n\right) $ and 
${\Sigma }\left( -,+\right)$ given in the proof of \cite[Theorem 3.9]{Salvai}, 
respectively. Therefore, the assertions of the theorem in
these cases hold.

\smallskip

Now we analyze the remaining cases $-\lambda \ell =k=1$. We have by
Proposition \ref{char} that $b_{1}$ is $\mathbb{L}$-symmetric. We suppose as
above that the pseudo Riemannian metric $g$ on $M$ has signature $(p,q)$ and
that $\mathcal{B}=\left\{ u_{1},\dots ,u_{m}\right\} $ is an orthonormal
basis of $T_{p}M$, with $g(u_{i}, u_{i})=1$ for $1\leq i\leq p$
and $g(u_{j}, u_{j}) =-1$ for $j>p$. Since $I_{1}\left( u\right)
=g^{\flat }\left( u\right) $ for any $u\in T_{p}M$, an easy computation
shows that the matrix of $b_{1}$ with respect to $\mathcal{B}$ (thought of
as an $\mathbb{L}$-basis of $\left( \mathbb{E},I_{1}\right) $) is $
\varepsilon\, $diag\,$\left( \text{id}_{p},-\text{id}_{q}\right) $, where $\text{id}_{s}$ is the 
$s\times s$\,-identity matrix. Therefore, there exist $\mathbb{L}$%
-linear coordinates $\left( \phi _{1}\right) ^{-1}:\left( \mathbb{E}%
,I_{1}\right) \rightarrow \mathbb{L}^{m}$, such that $B_{1}:=\left( \phi
_{1}\right) ^{\ast }b_{1}$ has the form 
\begin{equation*}
B_{1}\left( (Z_{1},W_{1}),(Z_{2},W_{2})\right) =\varepsilon
(Z_{1}^{t}Z_{2}-W_{1}^{t}W_{2})\text{,}
\end{equation*}%
where $Z_{1},Z_{2}\in \mathbb{L}^{p}$ and $W_{1},W_{2}\in \mathbb{L}^{q}$.
Let $\mathbb{L}^{p,q}$ be $\mathbb{L}^{m}$ endowed with the form $B_{1}$.

Let $\widetilde{\Sigma }\left( \lambda ,\ell \right) $ be the subset of
End\thinspace $_{\mathbb{R}}\left( \mathbb{L}^{m}\right) $ corresponding to $%
\sigma \left( \lambda ,\ell \right) $ via the isomorphism $\phi _{1}$.

Let $e=\left( 1-\varepsilon \right) /2$, $\overline{e}=\left( 1+\varepsilon
\right) /2$, which are null Lorentz numbers forming an $\mathbb{R}$-basis of $%
\mathbb{L}$. One has $e^{2}=e,\overline{e}^{2}=\overline{e},e\overline{e}=0$
and $\varepsilon e=-e,\varepsilon \overline{e}=\overline{e}$. Any element of 
$\mathbb{L}^{p,q}$ can be written as $xe + y \overline{e}$ with $%
x,y\in \mathbb{R}^{m}$ and $B_{1}$ has the form 
\begin{equation*}
B_{1}\left( x_{1}e+y_{1}\overline{e},x_{2}e+y_{2}\overline{e}\right)
=\varepsilon (e\langle x_{1},x_{2}\rangle _{p,q}+\overline{e}\langle
y_{1},y_{2}\rangle _{p,q})\text{,}
\end{equation*}%
where $\langle \cdot ,\cdot \rangle _{p,q}$ is the canonical nondegenerate
symmetric bilinear form with signature $(p,q)$ on $\mathbb{R}^{m}$.

Following Section 3 of \cite{HarveyL}, the group $G$ of transformations
preserving $B_{1}$ is isomorphic to $O(p,q)\times O(p,q)$; more precisely,
by writing any element $\widehat{A}$ of $G$ in the null basis, $\widehat{A}%
=Ae+B\overline{e}$, we have that $A,B\in O(p,q)$ and 
\begin{equation}
\widehat{A}\left( xe+y\overline{e}\right) =\left( Ax\right) e+\left(
By\right) \overline{e}  \label{Atilde}
\end{equation}%
for all $x,y\in \mathbb{R}^{m}$. Clearly $G$ acts by conjugation on $%
\widetilde{\Sigma }\left( 1,-1\right) $ and $\widetilde{\Sigma }\left(
-1,1\right) $.

\medskip

\noindent\textbf{Case} $\left( 1,-1\right) $\textbf{:} Let $S\in $ End$\,_{\mathbb{R}%
}\left( \mathbb{L}^{m}\right) $ be the conjugation in $\mathbb{L}^{m}$, that
is, $S\left( xe+y\overline{e}\right) =ye+x\overline{e}$, where $x,y\in 
\mathbb{R}^{m}$. By Proposition \ref{char}, it is easy to check that $S\in 
\widetilde{\Sigma }\left( 1,-1\right) $; indeed $S^{2}=\id$ and 
\begin{eqnarray*}
B_{1}\left( S\left( x_{1}e+y_{1}\overline{e}\right) ,S\left( x_{2}e+y_{2}%
\overline{e}\right) \right) &=&B_{1}\left( y_{1}e+x_{1}\overline{e}%
,y_{2}e+x_{2}\overline{e}\right) \\
&=&\varepsilon (e\langle y_{1},y_{2}\rangle _{p,q}+\overline{e}\langle
x_{1},x_{2}\rangle _{p,q}) \\
&=&-\overline{\varepsilon }\overline{(e\langle x_{1},x_{2}\rangle _{p,q}+
\overline{e}\langle y_{1},y_{2}\rangle _{p,q})} \\
&=&-\overline{B_{1}\left( \left( x_{1}e+y_{1}\overline{e}\right) ,\left(
x_{2}e+y_{2}\overline{e}\right) \right) }\text{.}
\end{eqnarray*}%
The isotropy subgroup at $S$ of the action of $G\equiv O(p,q)\times O(p,q)$
is isomorphic to the diagonal subgroup of $O(p,q)\times O(p,q)$, which is isomorphic to $O(p,q)$.

Now, we see that the action is transitive. Let $T\in \widetilde{\Sigma }
\left( 1,-1\right) $ and let $D(\pm 1)$ be the $\pm 1$-eigenspaces of $T$. 
Let us denote $h=\text{Im~}(B_{1})$. One checks that $h$
is a real symmetric bilinear form on $\mathbb{L}^{m}$ of signature $(2p,2q)$. 
We know from the proof of Proposition \ref{char} that multiplication by 
$\varepsilon $ sends $D(1)$ to $D(-1)$ (after the identification of $\varepsilon$ with $I_{1}$
via $\phi _{1}$). Moreover, it is an isometry of $(\mathbb{L}^{m},h)$. Thus,
if we call $h^{\pm}=\left. h\right| _{D(\pm 1)\times
D(\pm 1)}$, we obtain that $\varepsilon :(D(1),h^{+})\rightarrow
(D(-1),h^{-})$ is an isometry. Therefore $h^{+}$ and $h^{-}$ have the same
signature since $D(1)$ and $D(-1)$ are orthogonal with respect to $h$ by 
(\ref{condition}). This yields that the signature of $h^{+}$ is $(p,q)$ since
the signature of $h$ is $(2p,2q)$. Let $\left\{\tilde{u}_{1},\ldots ,\tilde{u}_{m}\right\}$ 
be an orthonormal basis of $(D(1),h^{+})$, where the first $p$ vectors are space-like and the remaining 
vectors are time-like. We define the real linear transformation $F$ by $F(e_{i})=\tilde{u}_{i}$, where $\left\{ e_{i}\mid
i=1,\dots ,m\right\} $ is the canonical basis of $\mathbb{L}^{m}$. Then $F$
extends $\mathbb{L}$-linearly to a map $\tilde{F}$ preserving $B_{1}$ and
satisfying $T=\tilde{F}S\tilde{F}^{-1}$. Consequently, $\widetilde{\Sigma }
\left( 1,-1\right) $ can be identified with $O(p,q)\times O(p,q)/O(p,q)$, as
desired.

\medskip

\noindent\textbf{Case} $\left( -1,1\right) $\textbf{: }We know from Remark \ref%
{split} that $g$ is split. Let $S\in $ End$\,_{\mathbb{R}}\left( \mathbb{L%
}^{2p}\right) $ be defined by $S\left( xe+y\overline{e}\right) =r\left(
y\right) e-r\left( x\right) \overline{e}$, where $x,y\in \mathbb{R}^{2p}$
and $r\left( x_{1},x_{2}\right) =\left( x_{2},x_{1}\right) $, with $x_{i}\in 
\mathbb{R}^{p}$. It is easy to check that $S\in \widetilde{\Sigma }\left(
-1,1\right) $ and that the isotropy subgroup at $S$ of the action of $%
O(p,p)\times O(p,p)$ consists of the maps $\widehat{A}$ as in (\ref{Atilde})
with $B=rAr$. Hence, the isotropy subgroup is isomorphic to $O(p,p)$.

It remains to show that the action is transitive. Let $T\in \widetilde{%
\Sigma }\left( -1,1\right) $ and define $R=\varepsilon T$. Note that $R^{2}=%
\id$ and denote by $D(\pm 1)$ the $\pm 1$-eigenspaces of $R.$
It is easy to check that $h:=\text{Re~}(B_{1})$ is a real symmetric bilinear
form on $\mathbb{L}^{2p}$ of signature $(2p,2p)$. As before, we call 
$h^{\pm}=\left. h\right| _{D(\pm 1)\times D(\pm 1)}$. 
In the same way as in the proof of the case $\left( 1,-1\right) $, we have that $%
\varepsilon :(D(1),h^{+})\rightarrow (D(-1),h^{-})$ is an isometry.
Therefore, $h^{+}$ and $h^{-}$ have the same signature and since $D(1)$ and 
$D(-1)$ are orthogonal with respect to $h$ by (\ref{condition}), we have
that the signature of $h^{+}$ is $(p,p)$. 
Let $F$ be the real linear transformation sending an orthonormal basis of 
$(D(1),h^{+})$ to an orthonormal basis of the $1$-eigenspace of $\varepsilon
S$ (with the inner product induced from $h$). Then $F$ extends $\mathbb{L}$%
-linearly to a map $\tilde{F}$ preserving $B_{1}$ and satisfying $R=\tilde{F}%
(\varepsilon S)\tilde{F}^{-1}$. It follows that $T=\tilde{F}S\tilde{F}^{-1}$%
, and so $T\ $is conjugate to $S$ in $O\left( p,p\right) \times O\left(
p,p\right) $.
\end{proof}

\section{Examples}

We present two families of examples of left invariant integrable $(\lambda, \ell)$-structures
on Lie groups. In this case, the usually hardest computation, namely the verification of integrability, 
is simplified by the following fact (see \cite[Section 3]{Bar}). 

Let $G$ be a Lie group with Lie algebra $\mathfrak{g}$. The Courant bracket of left invariant sections of 
$\mathbb{T}G=G\times \mathfrak{g}\times \mathfrak{g}^{\ast}$ 
coincides, via the obvious identification, with the Lie bracket of the corresponding elements of the cotangent Lie algebra 
$T^{\ast }\mathfrak{g}=\mathfrak{g}\ltimes _{\text{ad}^{\ast }}\mathfrak{g}^{\ast }$, that is, the direct sum of 
$\mathfrak{g}\oplus\mathfrak{g}^{\ast}$ endowed with the Lie bracket 
\begin{equation*}
\left[ \left( x,\alpha \right) ,\left( y,\beta \right) \right] =\left( \left[
x,y\right] ,-\beta \circ \text{ad}_{x}+\alpha \circ \text{ad}_{y}\right)
\end{equation*}
for $x,y\in \mathfrak{g}$ and $\alpha ,\beta \in \mathfrak{g}^{\ast }$. So, essentially, a left invariant generalized 
(para)complex structure on a Lie group $G$ is the same as a left invariant (para)complex structure on the Lie group 
$T^{\ast }G$, which is skew-symmetric with respect to the bi-invariant canonical split metric on $T^{\ast }G$ (the proof of the assertion for the paracomplex case is similar to the proof of the complex case given in \cite{Bar}). In this way, our examples \ref{nil} and \ref{curve} below can be set in the context of the articles \cite{Bar} and \cite{Adrian}, respectively.

\subsection{Integrable $(-1,-1)$-structures on nilmanifolds}\label{nil}

Generalized complex geometry was strengthened by the existence of manifolds
admitting no known complex or symplectic structure but which do admit
generalized complex structures. We have an analogous situation in our context of integrable 
$(\lambda, \ell)$-structures on a pseudo Riemannian manifold, in the invariant setting. It
is known, for instance, that all 6-dimensional nilmanifolds admit
generalized complex structures \cite{Cav}, but some of them admit neither
complex nor symplectic left invariant structures \cite[Theorem 5.1]{salamon}. 
Nevertheless, 

\begin{theorem}
Any of the five 6-dimensional nilmanifolds admitting neither a left
invariant complex nor a left invariant symplectic structure admits a left
invariant pseudo Riemannian metric $g$ and a compatible integrable $\left( -1,-1\right) 
$-structure.
\end{theorem}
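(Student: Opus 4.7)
The plan is to produce, for each of the five 6-dimensional nilpotent Lie algebras $\mathfrak{g}_1,\dots,\mathfrak{g}_5$ of Salamon's list \cite{salamon} that admit neither a complex nor a symplectic structure, an explicit triple $(g,A,B)$ consisting of a left invariant pseudo Riemannian metric $g$ on the corresponding nilmanifold and two endomorphisms $A,B$ of $\mathfrak{g}$ which assemble, via Proposition \ref{classical}, into an integrable $(-1,-1)$-structure. By that proposition, the algebraic requirements are $A^2+B^2=-\operatorname{id}$, $AB=BA$, $A$ symmetric and $B$ skew-symmetric with respect to $g$. The remaining Courant integrability of $S$ is, as noted at the start of Section \ref{nil}, equivalent to the integrability of the associated complex structure on the cotangent Lie algebra $T^{\ast}\mathfrak{g}=\mathfrak{g}\ltimes_{\mathrm{ad}^{\ast}}\mathfrak{g}^{\ast}$, so it reduces to a finite algebraic check on structure constants.

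First, I would write down the structure equations of each $\mathfrak{g}_i$ in a fixed basis $\{e_1,\dots,e_6\}$ and record which vectors span the center and the derived ideal. Then, for each algebra, I would propose an ansatz: take $g$ to be the diagonal pseudo Riemannian metric making $\{e_1,\dots,e_6\}$ pseudo orthonormal with signs to be chosen, and pick $A$ and $B$ in a block form respecting the filtration by the lower central series, so that the conditions $AB=BA$ and the symmetry/skew-symmetry conditions are easy to impose. The relation $A^2+B^2=-\operatorname{id}$ then fixes most free parameters, while the integrability of $S$ on $T^{\ast}\mathfrak{g}$ becomes a finite system of polynomial equations in the remaining entries and in the metric signs.

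The main obstacle is Step 3: the integrability condition, combined with $A^2+B^2=-\operatorname{id}$ and $[A,B]=0$, is strongly overdetermined, and there is no \emph{a priori} reason for a solution to exist for \emph{every} one of the five algebras. The proof is therefore necessarily case-by-case, and its substance lies in finding, for each $\mathfrak{g}_i$, an ansatz flexible enough to admit a solution; the two-step nilpotency and the large centers of the Salamon list are what make this feasible. A useful guiding principle is that the extremal cases $B=0$ (which would give a complex structure) and $A=0$ (which would give a symplectic form) are forbidden by hypothesis, so one must look for genuinely mixed solutions, typically by letting $A$ be diagonal with eigenvalues $\pm\cos\theta_i$ and $B$ a block-off-diagonal operator with $\pm\sin\theta_i$ intertwining paired eigenspaces of $A$; this guarantees $[A,B]=0$ and $A^2+B^2=-\operatorname{id}$ automatically, and leaves the integrability equations as the only real work. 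Finally, I would tabulate the resulting data $(g,A,B)$ for the five algebras, which proves the theorem.
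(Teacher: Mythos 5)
Your strategy coincides with the paper's: reduce via Proposition \ref{classical} to finding a metric $g$ and endomorphisms $A,B$ with $A^{2}+B^{2}=-\id$, $AB=BA$, $A$ symmetric and $B$ (essentially) skew for $g$, and then check Courant integrability by passing to the cotangent Lie algebra $T^{\ast}\mathfrak{g}=\mathfrak{g}\ltimes_{\mathrm{ad}^{\ast}}\mathfrak{g}^{\ast}$ and computing a Nijenhuis tensor. That is exactly how the paper proceeds. The problem is that what you have written is a plan, not a proof: the entire mathematical content of this theorem is the \emph{existence} of such data for each of the five Salamon algebras, and you never exhibit a single solution. You even concede the point yourself when you note that the system is ``strongly overdetermined'' and that ``there is no a priori reason for a solution to exist for every one of the five algebras.'' Without the explicit tables of $g_i$, $A^{i}$, $B^{i}$ and the verification that the Nijenhuis tensor on $T^{\ast}\mathfrak{g}_i$ vanishes, nothing has been proved. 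An existence claim established by construction cannot be certified by describing the search procedure.

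Two further points about your proposed ansatz. First, taking $g$ diagonal in the standard basis $\{e_{1},\dots,e_{6}\}$ is too rigid in general: in the paper's solution for $\mathfrak{g}_{1}=(0,0,12,13,14+23,34+52)$ the Gram matrix is diagonal only with respect to a rather nontrivial change of basis (e.g.\ involving $-4e_{1}+e_{4}$ and $-3e_{1}-e_{2}+e_{3}$), so you should allow non-diagonal metrics from the start. Second, your guiding picture of $A$ diagonal with eigenvalues $\pm\cos\theta_{i}$ and $B$ built from $\pm\sin\theta_{i}$ on paired eigenspaces does not match the solutions that actually exist: in the paper's tables the operators $A^{i}$ are not diagonalizable over $\mathbb{R}$ in this way (they contain genuine rotation blocks such as $A^{3}_{12}=-1$, $A^{3}_{21}=1$), the matrices $B^{i}$ generally fail to have maximal rank, and for several algebras $A$ has nontrivial kernel on part of the space while $B$ acts there alone. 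So the one-parameter ``interpolation between the forbidden extremes'' heuristic would likely steer you away from the solutions rather than toward them. To complete the proof you must carry out the case-by-case search and record the resulting $(g_i,A^{i},B^{i})$ together with the integrability verification.
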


\begin{proof}
By \cite[Theorem 5.1]{salamon}, the five 6-dimensional nilpotent Lie
algebras referred to in the statement of the theorem are (with the notation
of that paper)
\begin{equation*}
\begin{array}{c}
\mathfrak{g}_{1}=(0,0,12,13,14+23,34+52),\ \ \ \ \ \ \ \mathfrak{g}%
_{2}=(0,0,12,13,14,34+52), \\ 
\mathfrak{g}_{3}=(0,0,0,12,13,14+35),\ \ \ \ \ \ \mathfrak{g}%
_{4}=(0,0,0,12,23,14+35), \\ 
\mathfrak{g}_{5}=(0,0,0,0,12,15+34).
\end{array}
\end{equation*}
Let $G_{i}$ denote the simply connected Lie group with Lie algebra $\mathfrak{g}_{i}$.
Now we endow each $\mathfrak{g}_{i}$ with a pseudo Riemannian metric $g_{i}$%
. For $\mathfrak{g}_{1}$, the Gram matrix of $g_{1}$ with respect to the
ordered basis 
\begin{equation*}
\left\{ {e_{4}},-4\,{e_{1}}+{e_{4}},-3\,{e_{1}}-{e_{2}}+{e_{3}},-{e_{1}}+{%
e_{2}}+{e_{3}},{e_{6}},2\,{e_{5}}+{e_{6}}\right\}
\end{equation*}%
is $C_{1}=$ diag$~(4,-4,2,-2,-2,2)$; in particular, the signature of $g_{1}$
is $(3,3)$. For the remaining cases we give the Gram matrix $C_{i}$ of $%
g_{i} $ with respect to the basis $\{e_{1},\ldots ,e_{6}\}$:%
\begin{eqnarray*}
C_{2} &=&{\text{diag~}(-2,1,1,1,1,1/2)},\ \ \ \ \ \ \ \ \ C_{3}={%
\text{diag~}(-1,1,1,1,1,1)}, \\
C_{4} &=&{\text{diag~}(-2,1,2,1,1,1)},  \ \ \ \ \ \ \ \ \ \ \ \ C_{5}={\text{%
diag~}(-1,1,-1,1,1,1)}.
\end{eqnarray*}

Finally, we give compatible integrable $\left( -1,-1\right) $-structures $S_{i}$ on $(G_{i}, g_{i})$
in classical terms, as in (\ref{Sclassical}), that is, we make explicit the 
$6\times 6$\,-matrices $A^{i}$ and $B^{i}$ in each case with
respect to the basis $\{e_{1},\ldots ,e_{6}\}$ (we write down only their nonzero
components).


\begin{table}[htb] 
\centering
\begin{tabular}{|p{0.25cm}|c|c|}
\hline
\multirow{2}{3cm}{$\mathfrak{g}_1$} 
&
$A_{12}^{1}=A_{43}^{1}=A_{44}^{1}=A_{55}^{1}=A_{65}^{1}=1,$ 

&
$B_{35}^{1}=B_{46}^{1}=B_{52}^{1}=B_{62}^{1}=-1,$ 
\\ 
&
$A_{21}^{1}=A_{32}^{1}=A_{33}^{1}=A_{66}^{1}=-1$
&
$B_{36}^{1}=B_{53}^{1}=B_{64}^{1}=2, B_{51}^{1}=1, B_{54}^{1}=4$
\\
\hline
\multirow{2}{3cm}{$\mathfrak{g}_2$}
&
$A_{11}^{2}=A_{33}^{2}=A_{66}^{2}=1, A_{12}^{2}=-1$,
&
$B_{36}^{2}=B_{45}^{2}=-1, B_{54}^{2}=1, B_{63}^{2}=2$
\\ 
&$A_{22}^{2}=-1, A_{21}^{2}=2$ 
&
\\
\hline
{$\mathfrak{g}_3$}
&
$A_{12}^{3}=-1, A_{21}^{3}=1$
&
$B_{36}^{3}=B_{45}^{3}=-1, B_{54}^{3}=B_{63}^{3}=1$
\\ 
\hline
\multirow{2}{3cm}{$\mathfrak{g}_4$}
&
$A_{11}^{4}=1, A_{12}^{4}=A_{22}^{4}=A_{33}^{4}=-1$,
&
$B_{36}^{4}=B_{45}^{4}=-1, B_{54}^{4}=1, B_{63}^{4}=2$
\\ 
&$A_{66}^{4}=-1, A_{21}^{4}=2$
&\\ 
\hline
{$\mathfrak{g}_5$}
&
$A_{12}^{5}=A_{43}^{5}=1, A_{21}^{5}=A_{34}^{5}=-1$
&
$B_{56}^{5}=-1,\ \ \ \ \ B_{65}^{5}=1$
\\ 
\hline
\end{tabular}
\end{table}

One can check that $S_i $ satisfies (\ref{est.gen.}) for $\lambda=-1$ and (\ref{eqdef}) for $\kappa=-1$. 
It remains to check that $S_i$ is integrable. According to the fact above the statement of the theorem, 
we give the cotangent algebra $T^{\ast }\mathfrak{g}_{i}$ by specifying the Lie
bracket on the basis $\left\{ e_{1},\dots ,e_{6},e_{1}^{\ast },\dots
,e_{6}^{\ast }\right\} $, where $e_{i}^{\ast }\left( e_{j}\right) =\delta
_{ij}$. Since $\mathfrak{g}_{i}$ is a subalgebra of $T^{\ast }\mathfrak{g}_{i}$ and $%
\mathfrak{g}_{i}^{\ast }$ is an abelian subalgebra of $T^{\ast }\mathfrak{g}_{i}$,
it suffices to compute $\left[ e_{i},e_{j}^{\ast }\right] $ for $1\leq
i,j\leq 6$. We list them following the notation above (for instance, $%
5^{\ast }1+6^{\ast }3$ in the forth place means that $\left[ e_{5}^{\ast
},e_{1}\right] =\left[ e_{6}^{\ast },e_{3}\right] =e_{4}^{\ast }$):%
\begin{eqnarray*}
\text{For }\mathfrak{g}_{1} &:&\left( 23^{\ast }+34^{\ast }+45^{\ast
},3^{\ast }1+35^{\ast }+6^{\ast }5,4^{\ast }1+5^{\ast }2+46^{\ast },5^{\ast
}1+6^{\ast }3,26^{\ast },0\right) \text{;} \\
\text{For }\mathfrak{g}_{2} &:&\left( 23^{\ast }+34^{\ast }+45^{\ast
},3^{\ast }1+6^{\ast }5,4^{\ast }1+46^{\ast },5^{\ast }1+6^{\ast }3,26^{\ast
},0\right) \text{;} \\
\text{For }\mathfrak{g}_{3} &:&\left( 24^{\ast }+35^{\ast }+46^{\ast
},4^{\ast }1,5^{\ast }1+56^{\ast },6^{\ast }1,6^{\ast }3,0\right) \text{;} \\
\text{For }\mathfrak{g}_{4} &:&\left( 24^{\ast }+46^{\ast },4^{\ast
}1+35^{\ast },5^{\ast }2+56^{\ast },6^{\ast }1,6^{\ast }3,0\right) \text{;}
\\
\text{For }\mathfrak{g}_{5} &:&\left( 25^{\ast }+56^{\ast },5^{\ast
}1,46^{\ast },6^{\ast }3,6^{\ast }1,0\right) \text{.}
\end{eqnarray*}
Using this, one computes that the Nijenhuis tensor of $S_i$ vanishes. Hence $S_i$ is integrable. 

Notice that in general $B^{i}$ does not have maximal rank. This asserts that, in a certain sense, 
$S_i$ is far away from the extremal cases as in Example \ref{RQ}.
\end{proof}

\bigskip

\subsection{A curve of integrable $\left( 1,-1\right) $-structures on the Riemannian manifold of ellipses}\label{curve}

\smallskip

Although the theory of integrable $\left( \lambda ,\ell \right)$-structures
applies mostly to the pseudo Riemannian case (and even more to neutral
signature) we have the following example in the Riemannian setting.

Given $a\in \mathbb{R}^{2}$ and a positive definite $2\times 2$\,-matrix $A$ with 
$\det A=1$, let 
\begin{equation*}
E\left( A,a\right) =\left\{ z\in \mathbb{R}^{2}\mid \left( z-a\right)
^{t}A^{-1}\left( z-a\right) =1\right\} \text{,}
\end{equation*}%
which is an ellipse enclosing a region of area $\pi $ (centered at the point 
$a$, with axes not necessarily parallel to the coordinate axes). Any such
ellipse has this form and we call $\mathcal{E}$ the space consisting of all
of them.

The group $H=SL_{2}\left( \mathbb{R}\right) \ltimes \mathbb{R}^{2}$ of all
area preserving affine transformations of the plane acts canonically and
transitively on $\mathcal{E}$, with isotropy subgroup $K=SO_{2}\times
\left\{ 0\right\} $ at $E\left( I,0\right) $, the circle of radius $1$
centered at the origin (in fact, $\left( A^{1/2},a\right) $ sends $E\left(
I,0\right) $ to $E\left( A,a\right) $). Thus, we can identify $\mathcal{E}%
=H/K$ and so $\mathcal{E}$ is a four dimensional manifold. Consider on $H$
the left invariant Riemannian metric $g$ given at the identity by 
\begin{equation*}
g\left( \left( X,x\right) ,\left( Y,y\right) \right) =\text{tr}~\left(
X^{t}Y\right) +x^{t}y\text{.}
\end{equation*}
There is an $H$-invariant Riemannian metric $g$ on $\mathcal{E}$ such that the canonical 
projection $H \rightarrow \mathcal{E}$ is a Riemannian submersion. This metric $g$ on $\mathcal{E}$ 
turns out to be isometric to an irreducible left invariant Riemannian metric on a solvable Lie group 
$G$ whose metric Lie algebra has an
orthonormal basis $\mathcal{B}=\left\{ e_{1},e_{2},e_{3},e_{4}\right\} $
such that 
\begin{equation*}
[e_{1},e_{2}]=[e_{1},e_{4}]=0,\,[e_{3},e_{4}]=2e_{4},%
\,[e_{4},e_{2}]=2e_{1},[e_{3},e_{1}]=e_{1},\,[e_{2},e_{3}]=e_{2}
\end{equation*}
(see \cite[Example 3, page 19]{Apos}). One can take $G=T_{1}\ltimes \mathbb{R}^{2}$, where $T_{1}$ is the group of upper 
triangular $2 \times 2$\,-matrices with determinant 1, 
$$
e_{1}= (0, (1,0)), \hspace{0.25cm} e_{2}= (0, (0,1)), \hspace{0.25cm}
e_{3}= \left(\left( 
\begin{array}{cc}
1 & 0 \\ 
0 & -1
\end{array}
\right) , 0\right) \ \ \text{and}\ \ e_{4}= \left(\left( 
\begin{array}{cc}
0 & 2 \\ 
0 & 0
\end{array}
\right) , 0\right).
$$
(Notice that the orthogonal projection of 
$\left( 
\begin{array}{cc}
0 & 2 \\ 
0 & 0
\end{array}
\right)$ onto $SO_{2}(\mathbb{R})^{\bot}$ is 
$\left( 
\begin{array}{cc}
0 & 1 \\ 
1 & 0
\end{array}
\right)$.)
We will consider three geometric structures on $G$ compatible with the
Riemannian metric: The first one, a Riemannian product structure $r$ whose
eigendistributions are $D(1)=\text{span}\{e_{1},e_{2}\}$ and $D(-1)=\text{%
span}\{e_{3},e_{4}\}$ (both are subalgebras and the restriction of $g$ to
them is trivially nondegenerate).

Now, we recall from Example 3 and Lemma 1 in \cite{Apos} the definition of two 
left invariant almost complex structures $J_{+}$ and $J_{-}$ on $G$ given by 
$J_{\varepsilon }(e_{1})=e_{2}$, $J_{\varepsilon }(e_{3})=\varepsilon e_{4}$ 
($\varepsilon =\pm 1$). It tuns out that $\left( G,g,J_{\varepsilon }\right) $ 
is a K\"ahler manifold for $\varepsilon =1$ and an almost K\"ahler manifold for $\varepsilon =-1$, which
is not K\"ahler (for the definition of $J_{-}$ in terms of 
$J_{+}$ one computes that $D\left( 1\right) $ and $D\left( -1\right) $ are
the eigenspaces of the Ricci tensor of $\mathcal{E}$, with eigenvalues $0$
and $-6$, respectively). We comment that $\left( G,g,J_{-}\right) $ is 
isomorphic to the unique proper 3-symmetric space in four dimension 
(see \cite{Apos, Gray, Kowalski}).

Calling $\omega _{\varepsilon }$ the symplectic form associated to $%
J_{\varepsilon }$ and $g$, according to our nomenclature, $\omega
_{\varepsilon }$ is then an integrable $\left( 0,-1\right) $-structure on $G$%
. Let $R$, $Q_{\varepsilon }$ ($\varepsilon =\pm 1$) be the integrable $%
(1,-1)$-structures on $(G,g)$ associated with $r$ and $\omega _{\varepsilon }
$, as in Example \ref{RQ}, respectively. The following proposition presents
a curve $\Phi _{\varepsilon }$ of integrable $\left( 1,-1\right) $%
-structures on $G$ joining $r$ with $\omega _{\varepsilon }$.

\begin{proposition}
For any $t\in \mathbb{R}$, 
\begin{equation*}
\Phi _{\varepsilon }(t)=\cos t~R+\sin t~Q_{\varepsilon }
\end{equation*}%
defines a left invariant integrable $(1,-1)$-structure on $(\mathcal{E},g)$. 
\end{proposition}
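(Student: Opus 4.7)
The plan is to verify that $\Phi_{\varepsilon}(t)$ meets every requirement of Definition~\ref{Def} for $(\lambda,\ell)=(1,-1)$ on $(G,g)\cong(\mathcal{E},g)$; left invariance then follows at once from that of $R$ and $Q_\varepsilon$. The verification splits naturally into an algebraic and an integrability part.

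For the algebraic part, the key observation is that $R$ and $Q_\varepsilon$ anti-commute. Using the block shapes from Example~\ref{RQ} together with the $g$-symmetry of $r$ and the identity $\omega_\varepsilon^\flat = g^\flat\circ j_\varepsilon$, a direct computation shows that $RQ_\varepsilon+Q_\varepsilon R=0$ is equivalent to $r\circ j_\varepsilon=j_\varepsilon\circ r$. I would check this commutation on the orthonormal basis $\{e_1,e_2,e_3,e_4\}$: both $r$ (eigenvalue $+1$ on $\mathrm{span}\{e_1,e_2\}$ and $-1$ on $\mathrm{span}\{e_3,e_4\}$) and $j_\varepsilon$ ($e_1\mapsto e_2$, $e_2\mapsto -e_1$, $e_3\mapsto \varepsilon e_4$, $e_4\mapsto -\varepsilon e_3$) are block-diagonal for the same splitting. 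From $R^2=Q_\varepsilon^2=\id$ and $RQ_\varepsilon+Q_\varepsilon R=0$ one immediately obtains $\Phi_\varepsilon(t)^2=\id$. Skew-symmetry for $b$, anti-commutation with $I_1$, and the splitness of $\Phi_\varepsilon(t)$ then follow by $\mathbb{R}$-linearity from the corresponding properties of the endpoints $R,Q_\varepsilon\in\mathcal{S}_g(1,-1)$.

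The heart of the argument is Courant integrability. By the discussion opening Section~\ref{nil}, this is equivalent to the vanishing on the cotangent Lie algebra $T^\ast\mathfrak{g}$ of the classical Nijenhuis tensor of the paracomplex structure $\Phi_\varepsilon(t)$. A straightforward bilinear expansion, using $R^2=Q_\varepsilon^2=\Phi_\varepsilon(t)^2=\id$ together with the integrability of $R$ and $Q_\varepsilon$ individually (so $N_R=N_{Q_\varepsilon}=0$ on $T^\ast\mathfrak{g}$), collapses all but the cross term, yielding
\begin{equation*}
N_{\Phi_\varepsilon(t)}(X,Y)=\cos t\,\sin t\cdot M(X,Y),
\end{equation*}
where
\begin{equation*}
M(X,Y)=[RX,Q_\varepsilon Y]+[Q_\varepsilon X,RY]-R[Q_\varepsilon X,Y]-R[X,Q_\varepsilon Y]-Q_\varepsilon[RX,Y]-Q_\varepsilon[X,RY].
\end{equation*}
Thus it suffices to establish the $t$-independent compatibility identity $M\equiv 0$ for the pair $(R,Q_\varepsilon)$.

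The main obstacle is precisely the verification of $M\equiv 0$. I would carry it out by direct computation on the basis $\{e_1,\ldots,e_4,e_1^\ast,\ldots,e_4^\ast\}$ of $T^\ast\mathfrak{g}$, using the bracket given in the previous subsection together with the induced rule $[e_i,e_j^\ast]=-e_j^\ast\circ\mathrm{ad}_{e_i}$. The number of pairs to be checked is cut down drastically by the observations that $R$ preserves each summand in $T^\ast\mathfrak{g}=\mathfrak{g}\oplus\mathfrak{g}^\ast$ while $Q_\varepsilon$ swaps them, and that every resulting term is governed by the $(r,j_\varepsilon)$-commutation together with the compatibility $\omega_\varepsilon^\flat=g^\flat\circ j_\varepsilon$, producing systematic cancellations. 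Alternatively, one can observe that $\{R,Q_\varepsilon,RQ_\varepsilon\}$ generates a split-quaternionic triple of $b$-compatible generalized structures on $T^\ast\mathfrak{g}$ and invoke the framework of \cite{Adrian} to conclude integrability of the entire twistor circle $\Phi_\varepsilon(t)$ at once.
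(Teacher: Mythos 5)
Your proposal is correct, and the integrability part takes a genuinely different route from the paper. The algebraic half coincides with the paper's: both arguments rest on the anticommutation $RQ_\varepsilon+Q_\varepsilon R=0$ (which, as you note, reduces to $r j_\varepsilon=j_\varepsilon r$ and is immediate on the basis $\{e_1,\dots,e_4\}$), from which $\Phi_\varepsilon(t)^2=\id$ follows, the remaining conditions in (\ref{est.gen.}) and (\ref{eqdef}) being linear in $S$. For integrability, however, the paper works with the $t$-dependent objects directly: it writes the matrices of $R$ and $Q_\varepsilon$ in the basis $\mathcal{C}$ of $T^\ast\mathfrak{g}$, exhibits explicit spanning vectors of the eigenspaces $D_\varepsilon(\delta)$ of $\Phi_\varepsilon(t)$ (e.g.\ $-(\sin t)\,e_1+(\cos t-\delta)e_2^\ast$, etc.), and checks by hand that each four-dimensional eigenspace is a subalgebra of the cotangent algebra. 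You instead polarize the Nijenhuis tensor, using $N_R=N_{Q_\varepsilon}=0$ (integrability of $r$ and closedness of $\omega_\varepsilon$) to get $N_{\Phi_\varepsilon(t)}=\cos t\sin t\cdot M$ with $M$ the $t$-independent Nijenhuis concomitant of the pair, reducing the whole circle to the single identity $M\equiv 0$; your expansion $N_\Phi=\cos^2t\,N_R+\sin^2t\,N_{Q_\varepsilon}+\cos t\sin t\,M$ is correct. Your reduction is cleaner conceptually and makes transparent why integrability holds for \emph{all} $t$ simultaneously (it is the compatibility of the endpoints, not an accident of the interpolation), whereas the paper's explicit eigenvectors make the final subalgebra check very short and concrete. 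Note that, like the paper, you ultimately assert rather than display the terminal finite computation ($M\equiv 0$ on the basis of $T^\ast\mathfrak{g}$, versus the paper's ``an easy computation shows''); this is a well-defined check guaranteed to go through, so it is not a gap, but if you want the argument self-contained you should either exhibit the cancellations in $M$ or, for your alternative via the split-quaternionic triple $\{R,Q_\varepsilon,RQ_\varepsilon\}$, supply the additional verification that $RQ_\varepsilon$ is itself integrable before invoking a complex-product-structure framework in the spirit of \cite{Adrian}.
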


\begin{proof}
Since $R$ and $Q_{\varepsilon}$ are integrable $(-1,1)$-structures on $(G,g)$, they 
satisfy (\ref{est.gen.}) for $\lambda=1$ and (\ref{eqdef}) for $\kappa=1$. This implies 
easily that $Q_{\varepsilon} (t)$ satisfies those conditions as well ($R$ and $Q_{\varepsilon}$ anticommute).
It remains only to check that the eigenspaces $D_{\varepsilon }\left( \delta \right)$ ($\delta=\pm 1$) are Courant
involutive, or equivalently, by the fact stated at the beginning of the section, that they are Lie subalgebras of 
$T^{\ast }\mathfrak{g}$. For this, we consider the basis $\mathcal{C}$ of the cotangent
algebra given by the juxtaposition of $\mathcal{B}$ with its dual basis  $
\{e_{1}^{\ast },e_{2}^{\ast },e_{3}^{\ast },e_{4}^{\ast }\}$. As in the
first example in this section, we have to compute only $%
[e_{i},e_{j}^{\ast }]$, for $1\leq i,j\leq 4$. In this case, we obtain 
\begin{equation*}
\lbrack e_{1},e_{1}^{\ast }]=e_{3}^{\ast },\,[e_{2},e_{1}^{\ast
}]=2e_{4}^{\ast },\,[e_{2},e_{2}^{\ast }]=-e_{3}^{\ast },[e_{3},e_{1}^{\ast
}]=-e_{1}^{\ast },
\end{equation*}
\begin{equation*}
[e_{3},e_{2}^{\ast }]=e_{2}^{\ast },\, \lbrack e_{3},e_{4}^{\ast }]=-2e_{4}^{\ast },[e_{4},e_{1}^{\ast
}]=-2e_{2}^{\ast },\,[e_{4},e_{4}^{\ast }]=2e_{3}^{\ast }\text{.}
\end{equation*}%
The matrix of $R$ with respect to $\mathcal{C}$ is diag~$%
(1,1,-1,-1,-1,-1,1,1)$ and the matrix of $Q_{\varepsilon }$ with respect to
the same basis is$\,$%
\begin{equation*}
\left( 
\begin{array}{cc}
0_{4} & -B_{\varepsilon } \\ 
B_{\varepsilon} & 0_{4}%
\end{array}%
\right) 
\end{equation*}%
where%
\begin{equation*}
B_{\varepsilon }=\text{diag}~(j,\varepsilon j)\ \ \ \ \ \ \ \ \ \ \ \ \text{%
with\ \ \ \ \ \ \ \ \ \ \ }j=\left( 
\begin{array}{cc}
0 & -1 \\ 
1 & 0%
\end{array}%
\right) \text{,}
\end{equation*}%
and $0_{4}$ denotes the $4\times 4$ zero matrix. We have that the eigenspace $D_{\varepsilon }\left( \delta \right) $
is spanned by the vectors:
\begin{eqnarray*}
&&-(\sin t )\,e_{1}+(\cos \,t -\delta )e_{2}^{\ast },\ \ \ \ \ \ \ \ \ \ \ \ (\sin t) \,e_{2}+(\cos t\, -\delta
)e_{1}^{\ast },\  \\
&&\ \varepsilon (\sin t) \, e_{3}+(\cos t \,+\delta )e_{4}^{\ast },\ \ \ \ \ \ \ \ \ -\varepsilon
(\sin t) \,e_{4}+(\cos t \,+\delta )e_{3}^{\ast }\text{.}
\end{eqnarray*}
An easy computation shows that any of these subspaces is a subalgebra of the cotangent algebra $T^{\ast }\mathfrak{g}$, 
as desired.
\end{proof}

\end{document}